\numberwithin{equation}{section} 
\numberwithin{figure}{section}
\titleformat{\subsection}[runin] 
{\bfseries} {\thesubsection{.}}{0.15cm}{}[.] 
\titleformat{\subsubsection}[runin] 
{\em}{\thesubsubsection{.}}{0.15cm}{}[.] 
\newtheorem{theorem}{Theorem}[section] 
\newtheorem{proposition}[theorem]{Proposition}
\newtheorem{corollary}[theorem]{Corollary} 
\theoremstyle{definition} 
\newtheorem{definition}[theorem]{Definition} 
\newtheorem{remark}[theorem]{Remark} 
\newtheorem{problem}[theorem]{Problem} 
\newtheorem{example}[theorem]{Example} 
\newcommand\Ical{\mathcal{I}}
\newcommand\Cscr{\mathscr{C}}
\newcommand\Oscr{\mathscr{O}}
\newcommand\B{\mathbb{B}} 
\newcommand\C{\mathbb{C}} 
\newcommand\CP{\mathbb{CP}}
\newcommand\N{\mathbb{N}}
\newcommand\U{\mathbb{U}} 
\newcommand\Z{\mathbb{Z}}
\newcommand\igot{\mathfrak{i}}
\renewcommand\igot{\mathfrak{i}}
\newcommand\E{\mathrm{e}} 
\renewcommand\imath{\igot}
\newcommand\hra{\hookrightarrow}
\newcommand\wt{\widetilde} 
\newcommand\wh{\widehat} 
\newcommand\di{\partial} 
\newcommand\dibar{\overline\partial} 
\newcommand{\nad}[2]{\genfrac{}{}{0pt}{}{#1}{#2}}
\newcommand\dist{\mathrm{dist}} 
\renewcommand\span{\mathrm{span}}
\newcommand\Aut{\mathrm{Aut}} 
\newcommand\Pic{\mathrm{Pic}}
\def\dist{\mathrm{dist}} 
\def\span{\mathrm{span}} 
\def\rank{\mathrm{rank}} 
\def\Ell1{\mathrm{Ell_1}} 
\def\CEll1{\mathrm{CEll_1}}
\begin{document} 

\fancyhead[LO]{Oka tubes in holomorphic line bundles} 
\fancyhead[RE]{F.\ Forstneri\v c and Y.\ Kusakabe} 
\fancyhead[RO,LE]{\thepage} 

\thispagestyle{empty} 

%% Title 
%\vspace*{5mm} 

\begin{center} 
{\bf \LARGE Oka tubes in holomorphic line bundles} 

\vspace*{0.4cm} 

%% Authors 
{\large\bf Franc Forstneri\v c and Yuta Kusakabe} 
\end{center} 

\vspace*{0.5cm} 

{\small 
\noindent {\bf Abstract} \ \ 
Let $(E,h)$ be a semipositive hermitian holomorphic 
line bundle on a compact complex manifold $X$ with $\dim X>1$. 
Assume that for each point $x\in X$ there exists a divisor $D\in |E|$ 
in the complete linear system determined by $E$ 
whose complement $X\setminus D$ is a Stein neighbourhood of $x$ 
with the density property. Then, the disc bundle $\Delta_h(E)=\{e\in E:|e|_h<1\}$ 
is an Oka manifold while $D_h(E)=\{e\in E:|e|_h>1\}$ 
is a Kobayashi hyperbolic domain. In particular, the zero section of $E$ 
admits a basis of Oka neighbourhoods $\{|e|_h<c\}$ with $c>0$. 
We show that this holds if $X$ is a rational homogeneous manifold of 
dimension $>1$. This class of manifolds includes complex projective 
spaces, Grassmannians, and flag manifolds.
This phenomenon contributes to the heuristic principle that Oka 
properties are related to metric positivity of complex manifolds.
\hspace*{0.1cm} 
}

\noindent{\bf Keywords:}\hspace*{0.1cm} 
Oka manifold, holomorphic line bundle, hermitian metric, 
polarised manifold 

\noindent{\bf MSC (2020):}\hspace*{0.1cm} 
Primary 32Q56; Secondary 32E10, 32L05, 32Q10 

% 32E10: Stein spaces, Stein manifolds
% 32L05: Holomorphic bundles and generalizations 
% 32Q10 Positive curvature complex manifolds 
% 32Q56: Oka principle and Oka manifolds 

\noindent{\bf Date:}\hspace*{0.1cm} 
30 August 2024; minor edits on 21 November 2024

%%%%%%%%%% 
%%%%%%%%%% 
%%%%%%%%%% 
%%%%%%%%%% 
%%%%%%%%%% 
%%%%%%%%%% 

% 
% INTRODUCTION 
% 

\section{Introduction}\label{sec:intro} 
A complex manifold $Y$ is called an {\em Oka manifold} if holomorphic maps 
$S\to Y$ from any Stein manifold $S$ satisfy the Oka principle 
with approximation on compact holomorphically convex subsets of $S$ 
and interpolation on closed complex subvarieties of $S$; 
see \cite[Definition 5.4.1 and Theorem 5.4.4]{Forstneric2017E}. 
This is a central holomorphic flexibility notion in complex geometry, 
and it is of major interest to find new examples of Oka manifolds. 
A complex manifold $Y$ is an {\em Oka-1 manifold} 
\cite{AlarconForstneric2023Oka1} if these properties hold for maps 
$S\to Y$ from any open Riemann surface $S$. 
Every complex homogeneous manifold is Oka (see Grauert 
\cite{Grauert1957I} and \cite[Proposition 5.6.1]{Forstneric2017E}). 
Many further examples were given by Gromov \cite{Gromov1989} and others;
see the surveys in \cite{Forstneric2017E,Forstneric2023Indag}.

In this paper, we describe a new phenomenon in Oka theory, 
relating the Oka property of tubes in hermitian holomorphic line bundles
on compact Oka manifolds to the curvature properties of the metric.
We show in particular that disc bundles in many Griffiths semipositive 
holomorphic line bundles are Oka manifolds. This holds for 
semipositive ample line bundles on projective spaces 
(see Theorem \ref{th:projective}),
Grassmannians (see Proposition \ref{prop:Grassmannian}),  
their products (see Corollary \ref{cor:productofGrassmannians}),
and on any rational homogeneous manifold of 
dimension $>1$ (see Theorem \ref{th:rational}).
Our main result, Theorem \ref{th:density2}, establishes this
phenomenon for any polarised manifold $(X,E)$ 
with the polarised density property, see Definition \ref{def:PDP}. 
An important ingredient in the proofs are the recent results
of the second named author \cite{Kusakabe2024},
who found large classes of Oka manifolds given as complements 
of closed holomorphically convex sets in Stein manifolds
with the density property.

Let $\pi:E\to X$ be a holomorphic line bundle on a connected compact 
complex manifold $X$, and let $h$ be a hermitian metric on $E$. 
Denote by $|e|_h$ the norm of $e\in E$. We are interested in conditions
on $X$ and the hermitian line bundle $(E,h)$ which ensure that the disc bundle 
\begin{equation}\label{eq:db}
	\Delta_h(E)= \{e\in E: |e|_h<1\}
\end{equation}
is an Oka manifold. In particular, when does the zero section 
$E(0)=\{e\in E:|e|_h=0\}$ admit a basis of open Oka neighbourhoods? 
It turns out that these questions are related to semipositivity of the metric $h$,
hence to the existence of nontrivial holomorphic sections $X\to E$.

We begin with some immediate observations. The total space $E$ is Oka
if and only if the base $X$ is Oka \cite[Theorem 5.6.5]{Forstneric2017E}.  
Since $\Delta_h(E)$ admits a holomorphic retraction onto the zero section 
$E(0)\cong X$, if $\Delta_h(E)$ is Oka then $X$ is Oka 
\cite[Proposition 5.6.8]{Forstneric2017E}. 
For any $c>0$ the disc bundle $\Delta_{h,c}(E)=\{|e|_h<c\}$ is biholomorphic to 
$\Delta_h(E)$ by a dilation in the fibres, so an affirmative
answer to the first question implies the same for the second one.
The answers to both questions are negative for any hermitian metric $h$
on the trivial line bundle $E=X\times \C$. 
In this case, $\Delta_h(E)$ is contained in $X\times c\Delta$
for some $c>0$, where $\Delta\subset\C$ is the unit disc.
This manifold admits a bounded plurisubharmonic function coming from 
$c\Delta$ which is nonconstant on every open subset,
so it cannot contain any Oka domain \cite[Proposition 7.1.9]{Forstneric2017E}. 
The same argument applies to trivial vector bundles of higher rank on a compact
complex manifold.

A more subtle analysis is tied to the curvature of the metric $h$, which determines the geometric shape of the disc bundle \eqref{eq:db}.  
The curvature of $h$ is the $(1,1)$-form on $X$ given by
\[ 
	\imath \Theta_h=-\imath\, \di\dibar \log h=-\frac12 dd^c \log h,
	\quad  \imath=\sqrt{-1} 
\]  
(see \eqref{eq:curvature}). A hermitian holomorphic line bundle $(E,h)$
is {\em positive} if $\imath \Theta_h$ is a positive $(1,1)$-form, and 
{\em semipositive} if $\imath \Theta_h\ge 0$. A holomorphic line bundle
$E$ is positive if it admits a hermitian metric with positive curvature.
The disc bundle $\Delta_h(E)$ is a Hartogs domain in $E$, 
and the Levi form of its boundary is the 
hermitian form determined by $dd^c \log h$ (see Proposition \ref{prop:psc}).  
Hence, the metric negativity $\imath \Theta_h<0$
at $x_0\in X$ is equivalent to $\Delta_h(E)$ being strongly
pseudoconvex over a neighbourhood of $x_0$, so it is
not Oka. (Indeed, a domain with a strongly pseudoconvex
boundary point admits a nonconstant bounded plurisubharmonic 
function, hence it cannot be Oka; see \cite[Proposition 7.1.9]{Forstneric2017E}.)
If on the other hand $\imath \Theta_h\ge 0$ then $\Delta_h(E)$ is pseudoconcave, 
and we will show that it is an Oka manifold in many cases of interest. 

We begin by considering line bundles on the simplest compact
Oka manifolds, the projective spaces $\CP^n$. 
The isomorphism classes of holomorphic line bundles on a 
complex space $X$ are in bijective correspondence with the 
elements of the Picard group $\Pic(X)=H^1(X,\Oscr^*)$.
For projective spaces, $\Pic(\CP^n)\cong\Z$ is a free cyclic 
group generated by the {\em hyperplane section bundle} $\Oscr_{\CP^n}(1)$ 
(see Griffiths and Harris \cite{GriffithsHarris1994} or Wells \cite{Wells2008}). 
It is customary to write $\Oscr_{\CP^n}(k)$ for the $k$-th tensor power 
of $\Oscr_{\CP^n}(1)$. The dual $\U=\Oscr_{\CP^n}(-1)$ of $\Oscr_{\CP^n}(1)$ 
is the {\em universal bundle};
see \cite[p.\ 17, Example 2.6]{Wells2008}. The line bundle $\Oscr_{\CP^n}(k)$ 
is {\em positive} resp.\ {\em negative} according to whether $k>0$ or $k<0$. 
It admits a hermitian metric whose curvature is $k$-times the Fubini--Study form 
on $\CP^n$ (see Example \ref{ex:FB}).

% 
% THEOREM: LINE BUNDLES ON PROJECTIVE SPACES
% 
\begin{theorem}\label{th:projective} 
Given a positive holomorphic line bundle $E=\Oscr_{\CP^n}(k)$ 
on $\CP^n$ $(n\ge 1,\ k\ge 1)$ and a semipositive hermitian metric $h$ on $E$ 
(i.e., $\imath \Theta_h\ge 0$), the following assertions hold. 
\begin{enumerate}[\rm (a)] 
\item The punctured disc bundle $\Delta^*_h(E)=\{e\in E: 0<|e|_h<1\}$ 
is an Oka manifold, and the disc bundle $\Delta_h(E)=\{e\in E: |e|_h<1\}$ is 
an Oka-1 manifold.
\item If $n\ge 2$ or $E=\Oscr_{\CP^n}(1)$ then the disc bundle 
$\Delta_h(E)$ is an Oka manifold. 
\item The domain $D_h(E)=E\setminus \overline{\Delta_h(E)}=\{e\in E: |e|_h>1\}$ is 
Kobayashi hyperbolic and has pseudoconvex boundary $bD_h(E)=\{|e|_h=1\}$. 
\end{enumerate} 
For a negative holomorphic line bundle $E=\Oscr_{\CP^n}(k)$ 
$(n\ge 1,\ k\le -1)$ and a seminegative hermitian metric $h$ on $E$ 
($\imath \Theta_h\le 0$), the following assertions hold. 
\begin{enumerate}[\rm (a')] 
\item The domain $\Delta^*_h(E)$ is 
Kobayashi hyperbolic and pseudoconvex along $\{|e|_h=1\}$.
\item The domain $D_h(E)=E\setminus \overline{\Delta_h(E)}$ is Oka.
\end{enumerate} 
These results hold if the metric $h$ is continuous and semipositive 
(resp.\ seminegative) in the weak sense. 
They also hold for the restrictions of these bundles to any affine Euclidean 
chart in $\CP^n$.
\end{theorem} 

With $(E,h)$ as in part (b) of the theorem, the circle bundle $\{e\in E:|e|_h=1\}$ 
splits $E$ into a relatively compact Oka domain $\{|e|_h<1\}$ and a 
hyperbolic domain $\{|e|_h>1\}$. A phenomenon of this type was first observed 
by Forstneri\v c and Wold \cite{ForstnericWold2024IMRN} who showed that, 
under a mild assumption on an unbounded closed convex set 
$K \subset \C^n$ $(n>1)$, its interior $\mathring K$ is Kobayashi hyperbolic 
while its complement $\C^n \setminus K$ is an Oka domain. 

Note that the natural projection $\Delta_h(E)\to \CP^n$ 
in Theorem \ref{th:projective} is a holomorphic submersion and 
a topological fibre bundle, its base and total space are Oka manifolds
in case (b), yet its fibres are Kobayashi hyperbolic. 
In particular, it is not an Oka map (see Definition \ref{def:Okamap})
since the fibres of an Oka map are Oka manifolds 
(see \cite[Proposition 3.14]{Forstneric2023Indag}). 
We now show that this phenomenon does not occur in holomorphic fibre bundles. 

%
% FIBRE BUNDLES WITH A HYPERBOLIC FIBRE DO NOT HAVE 
% OKA TOTAL SPACE
%
\begin{proposition}\label{prop:hyperbolicfibre} 
If $E\to X$ is a holomorphic fibre bundle on a connected
complex manifold $X$  whose fibre $Y$ is 
Kobayashi hyperbolic with $\dim Y>0$, then $E$ is not an Oka manifold. 
\end{proposition}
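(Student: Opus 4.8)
The plan is to argue by contradiction: assuming that $E$ is Oka, I will produce a nonconstant entire curve inside a single fibre, which is impossible since the fibre $Y$ is hyperbolic. Fix a point $e_0\in E$, set $x_0=\pi(e_0)$, and choose a nonzero vertical tangent vector $v\in\ker d\pi_{e_0}=T_{e_0}E_{x_0}$; this is possible because $\dim Y>0$. Since every Oka manifold is strongly dominable, there is a holomorphic map $s\colon\C^{N}\to E$ with $s(0)=e_0$ that is a submersion at the origin; choosing $\xi\in\C^{N}$ with $ds_0(\xi)=v$ and setting $f(z)=s(z\xi)$ yields an entire curve $f\colon\C\to E$ with $f(0)=e_0$ and $f'(0)=v$. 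By construction its projection $\bar f=\pi\circ f\colon\C\to X$ satisfies $\bar f'(0)=d\pi_{e_0}(v)=0$.

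The key idea is then to pull the bundle back to the curve. The pullback $\bar f^{*}E\to\C$ is a holomorphic fibre bundle with fibre $Y$ over the simply connected base $\C$, and $f$ corresponds to a holomorphic section $\sigma(z)=(z,f(z))$ of it. I would next invoke the rigidity of the automorphism group of a hyperbolic manifold: a holomorphic family of automorphisms of $Y$ is necessarily locally constant, so the transition cocycle of any holomorphic $Y$-bundle is locally constant, i.e.\ the bundle is flat. A flat bundle over the simply connected base $\C$ is holomorphically trivial, so there is a biholomorphism $T\colon\bar f^{*}E\to\C\times Y$ over $\C$ carrying flat (parallel) sections to the constant sections $z\mapsto(z,\mathrm{const})$. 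I expect this rigidity statement to be the main obstacle, and it is exactly the place where hyperbolicity of the fibre enters; for the disc $Y=\D$ one checks it directly, since every holomorphic family of M\"obius automorphisms of $\D$ is constant, and the general case rests on the fact that $\Aut(Y)$ is a real Lie group of Kobayashi isometries admitting no nonconstant holomorphic families.

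Finally, writing $T\circ\sigma(z)=(z,g(z))$ defines a holomorphic map $g\colon\C\to Y$. Comparing $T$ near $z=0$ with the local trivialization of $\bar f^{*}E$ inherited from a trivialization $\pi^{-1}(U)\cong U\times Y$ of $E$ about $x_0$, the two differ by a transition which is locally constant by flatness; hence its derivative contributes nothing at $0$, and $g'(0)$ equals the image of $v$ under the fibrewise biholomorphism $E_{x_0}\cong Y$. Since $v\neq0$ is vertical this image is nonzero, so $g'(0)\neq0$ and $g$ is nonconstant. This contradicts the hyperbolicity of $Y$, which admits no nonconstant entire curves, and completes the argument. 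The only step requiring genuine work is the flatness of the bundle; everything else is the distance-decreasing/Liouville mechanism combined with strong dominability of Oka manifolds.
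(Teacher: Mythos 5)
Your proof is correct, and it reaches the conclusion by a different reduction than the paper, although both arguments turn on the same two classical inputs: Kobayashi's theorem that a hyperbolic manifold admits no nonconstant holomorphic family of automorphisms (hence every holomorphic fibre bundle with hyperbolic fibre is flat), and Royden's theorem that a flat bundle over a simply connected base is holomorphically trivial. The paper applies these to the pullback of $E$ to the universal cover $\wt X$ of $X$, obtains $\pi^*E\cong \wt X\times Y$, notes that this product is not Oka because of the hyperbolic factor, and concludes by the invariance of the Oka class under holomorphic covering maps. You instead argue by contradiction from strong dominability: you pull the bundle back along the projection of an entire curve with nonzero vertical derivative, trivialize over $\C$, and extract a nonconstant entire curve in $Y$, contradicting (Brody) hyperbolicity. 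Your route avoids both covering invariance and the product/retract observation, and it actually proves a sharper statement: no entire curve in $E$ can have nonzero vertical derivative anywhere (every entire curve is horizontal for the flat structure), so $E$ is not even strongly dominable at any point. The paper's route is shorter and yields the structural fact that $\pi^*E$ is a product. Two small points to tighten in your write-up: your justification of the rigidity lemma is circular as phrased (``$\Aut(Y)$ is a real Lie group \dots admitting no nonconstant holomorphic families'' is precisely the claim to be proved); it is exactly Kobayashi's Theorem 5.4.5, the result the paper cites, so you should cite it rather than sketch it. Also, the local constancy of the comparison between your global trivialization $T$ and the trivialization inherited from $E$ near $z=0$ follows from that same rigidity theorem applied to the resulting fibre-preserving self-map of a product, not from ``flatness'' as such; this is the step that guarantees $g'(0)\neq 0$, and with the correct attribution it is airtight.
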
 

\begin{proof} 
Let $\pi:\wt X\to X$ be the universal covering. 
The pullback bundle $\pi^*E\to \wt X$ has the same fibre $Y$. 
Since $Y$ is hyperbolic, there are no nontrivial holomorphic maps 
to its holomorphic automorphism group $\Aut(Y)$ (see Kobayashi 
\cite[Theorem 5.4.5]{Kobayashi1998}), so this is a 
flat bundle. Since $\wt X$ is simply connected, 
it follows that the bundle $\pi^*E\to \wt X$ is trivial, 
isomorphic to $\wt X\times Y$ 
(see Royden \cite[Corollary 1]{Royden1974}). 
This manifold is not Oka due to the hyperbolic factor $Y$. 
Since the natural map $\pi^*E \to E$ is a holomorphic covering map 
and the class of Oka manifolds is invariant under such maps 
(see \cite[Proposition 5.6.3]{Forstneric2017E}), $E$ is not Oka. 
\end{proof} 

%
% IDEA OF PROOF
%
Theorem \ref{th:projective} is proved in Section \ref{sec:proofs}; 
here is an outline. 
If $E=\Oscr_{\CP^n}(k)$ with $k>0$, then for any hermitian metric $h$ on $E$ 
the restriction of the disc bundle $\Delta_h(E)$ to any affine Euclidean chart in 
$\CP^n$ is a Hartogs domain $\Omega$ in $\C^{n+1}$ whose radius grows
at least linearly (see Example \ref{ex:FB}). If $h$ is semipositive then 
$\Omega$ is pseudoconcave (see Proposition \ref{prop:psc} (iii')). 
By Proposition \ref{prop:Hartogs}, such a domain is Oka if $n\ge 2$. 
This result and the localization theorem for Oka manifolds 
\cite[Theorem 1.4]{Kusakabe2021IUMJ} are the key to the proof 
of part (b). Part (a) is seen by an explicit analysis
of the hyperplane section bundle $\Oscr_{\CP^n}(1)$, 
using that complements of compact polynomially convex sets in $\C^{n+1}$ 
$(n\in\N)$ are Oka (see Kusakabe \cite[Corollary 1.3]{Kusakabe2024}),
and that the relevant properties of these tubes are preserved under 
tensor powers (see Proposition \ref{prop:tensor} and Corollary \ref{cor:tensor}).
When passing to the hermitian dual bundle $(E^*,h^*)$, positivity and negativity 
get reversed and the punctured disc bundle $\Delta^*_h(E)$  
is biholomorphic to the outer tube $D_{h^*}(E^*)=\{h^*>1\}$ of the dual bundle, 
which gives part (b'). Parts (c) and (a')
follow from Grauert's result on blowing down exceptional varieties
\cite[Satz 1, p.\ 341]{Grauert1962}; see Remark \ref{rem:blowdown}. 

%
% THE MAIN THEOREMS USING THE DENSITY PROPERTY
%
We now proceed towards our main results.
Recall that a holomorphic vector field on a complex manifold $X$ is said to be 
{\em complete} if its flow exists for all complex values of time, 
so it forms a complex one-parameter group of holomorphic automorphisms 
of $X$. The following class of complex manifolds was introduced by 
Varolin \cite{Varolin2001};
see also \cite[Definition 4.10.1]{Forstneric2017E}. 

%
%   DENSITY PROPERTY
%
\begin{definition}\label{def:density}
A complex manifold $X$ has the {\em density property} if every holomorphic 
vector field on $X$ can be approximated uniformly on compacts by sums and 
commutators of complete holomorphic vector fields on $X$.
\end{definition}

Every Stein manifold $X$ with the density property
has infinite dimensional automorphism group (hence $\dim X>1$), and 
it is an elliptic Oka manifold (see \cite[Proposition 5.6.23]{Forstneric2017E}).
The fact that the Euclidean spaces $\C^n$, $n>1$, have the density property 
was discovered by Anders\'en and Lempert \cite{AndersenLempert1992}.
Most complex Lie groups and complex homogeneous manifolds have 
the density property. Surveys can be found in \cite[Chapter 4]{Forstneric2017E}, 
\cite{ForstnericKutzschebauch2022}, and \cite{Kutzschebauch2020}.

%
%	THEOREM: DENSITY 1
%
\begin{theorem}\label{th:density1}
Assume that $X$ is a compact complex submanifold of $\CP^n$ such that 
for the affine charts $U_i\cong\C^n$ covering $\CP^n$
the Stein manifold $X\cap U_i$ has the density property for every $i=0,\ldots,n$.
Let $E\cong \Oscr_{\CP^n}(k)$ with $k\ge 1$ 
be a positive holomorphic line bundle on $\CP^n$ endowed with a hermitian 
metric $h$ satisfying $\imath\Theta_h|_{TX} \ge 0$. 
Then the disc bundle
$
	\Delta_h(E)|_X=\{e\in E|_X: |e|_h<1\}
$
is an Oka manifold while $D_h(E)|_X=\{e\in E|_X: |e|_h>1\}$ is 
a Kobayashi hyperbolic domain in $E|_X$
with pseudoconvex boundary $\{e\in E|_X : |e|_h=1\}$. 
\end{theorem}

An example satisfying Theorem \ref{th:density1} is the hyperquadric 
\begin{equation}\label{eq:hyperquadric}
	X=\bigl\{[z_0:z_1:\cdots:z_n] \in\CP^n : 
	z_0^2+z_1^2+\cdots+z_n^2=0\bigr\}, \ \ n\ge 3.
\end{equation}
The intersection of $X$ with any affine chart 
$U_i=\{z_i\neq 0\}$ is the complexified sphere
in $\C^n$, which is a Danielewski manifold and has the density property 
(see Kaliman and Kutzschebauch \cite{KalimanKutzschebauch2008MZ}).
This {\em null quadric} plays a major role in the theory of minimal
surfaces; see \cite{AlarconForstnericLopez2021}.
Another example is the Pl\"ucker embedding of a Grassmannian
of dimension $>1$; see Example \ref{ex:Grassmannian}. 
 
Denote by $|E|$ the complete linear system of divisors on $X$ associated to 
a holomorphic line bundle $E\to X$ (see e.g.\ \cite{GriffithsHarris1994}). 
The divisors in $|E|$ are the zero sets with multiplicities of nontrivial 
holomorphic sections of $E$. We shall often identify 
a divisor with its support, neglecting the multiplicities. 
The following is our main result. 

%
% THE MAIN THEOREM: DENSITY 2
%
\begin{theorem}\label{th:density2} 
Let $E$ be a holomorphic line bundle on a compact complex manifold $X$.
Assume that for each point $x\in X$ there exists a divisor $D\in |E|$ 
whose complement $X\setminus D$ is a Stein 
neighbourhood of $x$ with the density property.
Given a semipositive hermitian metric $h$ on $E$, the disc bundle 
$\Delta_h(E)$ \eqref{eq:db} is an Oka manifold while 
$D_h(E)=E\setminus \overline{\Delta_h(E)}$ is  a Kobayashi hyperbolic 
domain with pseudoconvex boundary $bD_h(E)=\{|e|_h=1\}$. 
In particular, the zero section of $E$ admits a basis of Oka neighbourhoods 
$\Delta_{h,c}(E)=\{|e|_h<c\}$ with $c>0$. 
\end{theorem}

A holomorphic line bundle $E$ on a compact complex manifold $X$
is called {\em basepoint-free} if the intersection of the divisors in $|E|$ is empty. 
If this holds, there is a holomorphic map $\Phi:X\to \CP^n$
for some $n\in\N$ (see \eqref{eq:Phi}) such that $E$ is isomorphic to the pullback 
$\Phi^*\Oscr_{\CP^n}(1)$ of the hyperplane section bundle 
(see \cite[Theorem II.7.1]{Hartshorne1977}). Hence, $E$ 
admits a semipositive hermitian metric obtained by pulling back a positive 
metric on $\Oscr_{\CP^n}(1)$ (see Example \ref{ex:FB}).
This gives the following metric-free corollary to Theorem \ref{th:density2}.

\begin{corollary}\label{cor:main}
Let $E$ be a holomorphic line bundle on a compact complex manifold $X$.
If for each point $x\in X$ there exists a divisor $D\in|E|$ whose complement is 
a Stein neighbourhood of $x$ with the density property, 
then the zero section of $E$ admits a basis of Oka neighbourhoods.
\end{corollary}

Theorems \ref{th:density1} and \ref{th:density2} are proved in 
Section \ref{sec:proofs}. In Section \ref{sec:examples} we give 
several examples. To this end, we recall the following notions.
A holomorphic line bundle $E$ on a compact complex manifold $X$
is called {\em ample} if some positive tensor power $E^{\otimes k}$ 
is {\em very ample}, meaning that holomorphic sections of $E^{\otimes k}$ 
provide an embedding of $X$ into a projective space.
The Kodaira embedding theorem \cite{Kodaira1954} 
implies that a positive line bundle is ample. Conversely, every ample line bundle 
admits a hermitian metric that makes it a positive line bundle.
A {\em polarised manifold} is a pair $(X,E)$ of a 
compact complex manifold $X$ 
and an ample line bundle $E$ on $X$. Note that such $X$
is necessarily projective, and every projective manifold admits
an ample line bundle.

%
% POLARISED DENSITY PROPERTY
%
\begin{definition}\label{def:PDP} 
\begin{enumerate}[\rm (a)] 
\item A polarised manifold $(X,E)$ has the {\em polarised density property} if 
for each point $x\in X$ there exists a divisor $D\in |E|$ whose complement 
$X\setminus D$ is a Stein neighbourhood of $x$ with the density property.
\item 
A compact projective manifold $X$ has the polarised density property
if $(X,E)$ has the polarised density property for every ample line bundle $E$ 
on $X$.
\end{enumerate} 
\end{definition}

It is easily seen that for a polarised manifold $(X,E)$ 
and a divisor $D\in |E|$, the complement $X\setminus D$ is an affine manifold, 
hence Stein. A Stein manifold with the density property is an Oka
manifold (see \cite[Proposition 5.6.23]{Forstneric2017E}). 
Hence, if $(X,E)$ has the polarised density property, then 
$X$ is an Oka manifold by the localization theorem
\cite[Theorem 1.4]{Kusakabe2021IUMJ}.

By Proposition \ref{prop:positive}, every holomorphic line bundle satisfying 
the condition of Theorem \ref{th:density2} is ample.
Hence, Theorem \ref{th:density2} can equivalently be stated as follows.

%
%  The main theorem, second version
% 
\begin{theorem}\label{th:polarised}
If $(X,E)$ is a polarised manifold with the polarised density property,
then for any semipositive hermitian metric $h$ on $E$ the disc bundle 
$\Delta_h(E)$ \eqref{eq:db} is an Oka manifold while the domain 
$D_h(E)=E\setminus \overline{\Delta_h(E)}$ is Kobayashi hyperbolic.  
\end{theorem}

Theorem \ref{th:projective} says that the projective space $\CP^n$
of dimension $n>1$ has the polarised density property.
In Section \ref{sec:examples} we prove the following further 
results on this topic.

\begin{itemize}
\item If $(X,E)$ has the polarised density property then so 
does $(X,E^{\otimes k})$ for every $k>1$ (see Proposition \ref{prop:tensor2}). 
\item
Every complex Grassmannian (or a product of Grassmannians)
of dimension $>1$ has the polarised density property 
(see Proposition \ref{prop:Grassmannian}
and Corollary \ref{cor:productofGrassmannians}). 
\item 
If the polarised manifolds $(X_1,E_1)$ and $(X_2,E_2)$ 
have the polarised density property then 
so does their exterior tensor product $(X_1\times X_2,E_1\boxtimes E_2)$
(see Proposition \ref{prop:product}). 
\item 
If $(X,E)$ has the polarised density property, then 
$(X\times\CP^n, E\boxtimes\Oscr_{\CP^n}(k))$ $(n,k>0)$ also has the 
polarised density property (see Proposition \ref{prop:productCPr}).
\item Recall that a {\em rational manifold} is a projective manifold birationally 
isomorphic to a projective space. 
If $X_1,\ldots,X_m$ $(m\ge 2)$ are rational manifolds such that every $X_i$ 
with $\dim X_i>1$ has the polarised density property, then 
their product $X=X_1\times X_2\times \cdots\times X_m$ also has
the polarised density property (see Proposition \ref{prop:product-rational}).
\item Every rational homogeneous manifold of dimension $>1$ has the polarised density property (see Theorem \ref{th:rational}).
\end{itemize}

%
%  HIGHER RANK, MIXED SIGNATURE
%
So far we have only discussed line bundles. 
One may ask what can be said about the Oka properties
of (semi) positive hermitian vector bundles $(E,h)$ of rank $>1$ on an Oka 
manifold $X$. In particular, when is the tube $\{e\in E:|e|_h<1\}$ Oka? 
Its boundary $\{|e|_h=1\}$ is strongly pseudoconvex in 
the fibre direction and pseudoconcave in the remaining 
directions; see \cite[Proposition 6.2]{DrinovecForstneric2010AJM}. 
We are not aware of any example of an Oka domain 
whose boundary fails to be pseudoconcave. For the same reason, 
we do not know anything about these questions if the hermitian metric 
has mixed signature.
%
% SEMINEGATIVE VECTOR BUNDLES
%
On the other hand, we obtain the following analogue of Theorem 
\ref{th:projective} (b') for any Griffiths seminegative hermitian vector bundle
(see Griffiths \cite{Griffiths1965,Griffiths1969} and Definition 
\ref{def:Griffiths-positive}) of rank $>1$, possibly trivial, on an Oka manifold. 

% 
% MAIN4 
% 
\begin{theorem}\label{th:negative} 
If $(E,h)$ is a Griffiths seminegative hermitian holomorphic vector bundle of rank 
$>1$ on a (not necessarily compact) Oka manifold $X$, then 
$D_h(E)=\{e\in E:|e|_h>1\}$ is an Oka domain with pseudoconcave 
boundary $bD_h(E)=\{e\in E:|e|_h=1\}$. 
\end{theorem} 

%
%  REMARK ON BLOWING DOWN THE EXCEPTIONAL SUBVARIETY
%
\begin{remark}\label{rem:blowdown}
If $(E,h)$ is a Griffiths seminegative holomorphic vector bundle on a 
complex manifold $X$, then the function $\phi(e)=|e|_h^2$ 
is plurisubharmonic on $E$ (see Proposition \ref{prop:positivity-psh} and 
Remark \ref{rem:continuousGriffiths}). If in addition the metric $h$ 
is Griffiths negative then $\phi$ is strongly plurisubharmonic on $E\setminus E(0)$.
In the latter case, with $X$ compact, the zero section 
$E(0)\cong X$ is the maximal compact complex submanifold of $E$,  
which can be blown down to a point (see Grauert \cite[Satz 1, p.\ 341]{Grauert1962}). 
This gives a Stein space $\wt E$, which is typically singular 
at the blown-down point, such that the image of the tube $\{|e|_h<c\}$ is a relatively 
compact domain in $\wt E$ for any $c>0$, and the tube 
$\{0<|e|_{\tilde h}<1\}\subset E$ is Kobayashi hyperbolic for any hermitian 
metric $\tilde h$ on $E$. 
If in addition $X$ is Kobayashi hyperbolic, then every tube $\{|e|_h<c\}$ 
is also Kobayashi hyperbolic, so the zero section $E(0)$ admits 
a basis of Kobayashi hyperbolic neighbourhoods.
\end{remark}

% 
% REMARK ON CONTINUOUS METRICS 
% 
\begin{remark}\label{rem:continuousmetric} 
The proofs of Theorems \ref{th:projective}, \ref{th:density1}, \ref{th:density2}, 
and \ref{th:negative}, given in Section \ref{sec:proofs}, show that these 
results also hold for continuous hermitian metrics. 
Indeed, the basic relationship between semipositivity or seminegativity 
of the hermitian metric and the eigenvalues of the Levi form of the 
norm function remains in place (see Remark \ref{rem:continuousGriffiths}). 
\end{remark} 

%
% ABOUT SECTION 5
%
As an application of our results, we show in 
Section \ref{sec:cluster} that the Oka properties of tube domains
in holomorphic vector bundles $E\to X$ on a compact complex manifold $X$
imply the existence of holomorphic maps $S\to E$ from any Stein manifold 
$S$ with $\dim S<\dim E$ having the cluster set either in the zero section $E(0)$ 
(when $E$ is a positive line bundle; see Theorem \ref{th:cluster}) or at infinity 
(when $E$ is a Griffiths negative vector bundle; see Theorem \ref{th:proper}).

% 
% 
% OKA AND METRIC POSITIVITY 
% 
% 
Our results contribute to the heuristic principle that Oka properties 
are related to metric positivity of complex manifolds while holomorphic rigidity 
properties, such as Kobayashi hyperbolicity, are related to metric negativity. 
Examples of this principle are discussed in \cite[Sect.\ 11]{Forstneric2023Indag};
let us recall the most important ones and mention some new ones. 

Beginning on the rigidity side, 
a hermitian manifold with holomorphic sectional curvature 
bounded above by a negative constant is Kobayashi hyperbolic; 
see Grauert and Reckziegel \cite{GrauertReckziegel1965}, 
whose result generalizes the Ahlfors--Schwarz lemma \cite{Ahlfors1938}, and 
the results by Wu and Yau \cite{WuYau2016IM,WuYau2016CAG}, 
Tosatti and Yang \cite{TosattiYang2017}, 
Diverio and Trapani \cite{DiverioTrapani2019}, 
and Broder and Stanfield \cite{BroderStanfield2023X}, among others. 
Furthermore, every compact complex manifold of Kodaira general type 
is volume hyperbolic \cite{KobayashiOchiai1975}, and hence no 
such manifold is Oka. 

On the flexibility side, it is known that 
every compact K\"ahler manifold with semipositive 
holomorphic bisectional curvature is an Oka manifold; 
see \cite[Theorem 11.4]{Forstneric2023Indag}, which follows from
the classification of such manifolds by Mori \cite{Mori1979} 
and Siu and Yau \cite{SiuYau1980} (for positive bisectional curvature, 
when they are projective spaces) and Mok \cite{Mok1988} in the 
semipositive case.  As for not necessarily K\"ahler metrics,
if $(X,h)$ is a compact connected hermitian manifold 
whose holomorphic bisectional curvature is semipositive everywhere 
and positive at a point, then $X$ is a projective space 
(see Ustinovskiy \cite[Corollary 0.3]{Ustinovskiy2019}), which is Oka. 
Every compact K\"ahler manifold with positive holomorphic sectional curvature 
is rationally connected and projective (see Yang \cite{YangX2018}).
It is conjectured that every such manifold is an Oka-1 manifold  
(see \cite[Conjecture 9.1]{AlarconForstneric2023Oka1}). 
A result of Matsumura \cite[Theorem 1.3]{Matsumura2022} 
implies that a projective manifold with semipositive holomorphic 
sectional curvature is the total space of a holomorphic fibre bundle 
over an Oka manifold with a projective rationally connected fibre enjoying the 
corresponding semipositivity. By \cite[Theorem 5.6.5]{Forstneric2017E} 
the problem whether every such manifold is Oka reduces to 
the rationally connected case. Hence, the main problem is to better understand
the relationship between (semi) positivity of holomorphic sectional curvature 
and the Oka property for rationally connected projective manifolds.

% 
% 
% SECTION 2: PRELIMINARIES 
% 
% 
\section{Preliminaries}\label{sec:prelim} 
In this section, we recall the necessary notions and tools, and we prepare some results which will be used in the proofs given in the following section.

A holomorphic line bundle $E\to X$ is given on some open covering 
$\{U_i\}_i$ of $X$ by a 1-cocycle of nonvanishing holomorphic functions 
$\phi_{i,j}:U_{i,j}=U_i\cap U_j \to\C^*$. A point $(x,t)\in U_j\times \C$ 
with $x\in U_{i,j}$ is identified in $E$ with $(x,\phi_{i,j}(x)t)\in U_i\times\C$. 
A holomorphic section $f:X\to E$ is given by a 1-cochain
$f_i\in \Oscr(U_i)$ satisfying $f_i=\phi_{i,j}f_j$ on $U_{i,j}$.
A hermitian metric $h$ on $E$ is given 
on any holomorphic line bundle chart $(x,t)\in U_i\times\C$ by 
$h(x,t)=h_i(x)|t|^2$, where the positive functions $h_i:U_i\to (0,+\infty)$ 
satisfy the compatibility conditions 
\begin{equation}\label{eq:transition-h} 
	h_i(x)|\phi_{i,j}(x)|^2 = h_j(x)\quad \text{for $x\in U_{i,j}$}. 
\end{equation} 
The curvature of the metric $h$ is the $(1,1)$-form on $X$ given on each chart $U_i$ by 
\begin{equation}\label{eq:Thetah} 
	\Theta_h =-\di\dibar \log h_i=-\di\dibar \log h = \frac{\imath}{2} dd^c \log h. 
\end{equation} 
(The second equality holds since $\di\dibar \log|t|^2=0$ on $t\ne 0$.) 
The bundle $(E,h)$ is said to be positive (resp.\ negative) if the real $(1,1)$-form 
\begin{equation}\label{eq:curvature} 
	\imath \Theta_h=-\imath\, \di\dibar \log h=-\frac12 dd^c \log h 
\end{equation} 
on $X$ is positive (resp.\ negative). Similarly we define semipositivity and seminegativity. 
It is obvious that the restriction of a (semi) positive line bundle $E\to X$
to a complex submanifold $Y\subset X$ is (semi) positive. 
% 
% TENSOR PRODUCTS AND POWERS 
% 
If $(E',h')$ is another hermitian holomorphic line bundle on $X$ given on the same 
open covering $\{U_i\}_i$ by the $1$-cocyle $\phi'_{i,j}$, then the tensor product 
line bundle $E\otimes E'$ is given by the $1$-cocycle 
$\phi_{i,j}\phi'_{i,j} \in \Oscr(U_{i,j},\C^*)$. If $f$ and $f'$ are holomorphic 
sections of $E$ and $E'$, respectively, given by 1-cochains $f_i, f'_i\in \Oscr(U_i)$,
then $f\otimes f'$ is a holomorphic section of $E\otimes E'$
given by the 1-cochain $f_if'_i\in \Oscr(U_i)$. If a hermitian metric
$h'$ on $E'$ is given by functions $h'_i:U_i\to(0,\infty)$, then the product metric 
$h\otimes h'$ on $E\otimes E'$ is defined by the collection 
$h_ih'_i:U_i\to (0,\infty)$. From \eqref{eq:Thetah} we see that  
\[
	\Theta_{h\otimes h'} = \Theta_h + \Theta_{h'}. 
\]
Hence, the product of semipositive metrics is semipositive,
and is positive if one of the metrics is positive. For $k\in\Z$ we denote by 
$E^{\otimes k}$ the $k$-th tensor power of $E$, given by the $1$-cocycle 
$\phi_{i,j}^k$. If $h$ is a hermitian metric on $E$ given by functions $h_i(x)$
\eqref{eq:transition-h}, then the metric $h^{\otimes k}$ on $E^{\otimes k}$ 
is given by the functions $h_i(x)^k$ for $x\in U_{i}$. 
The hermitian dual bundle $(E^*,h^*)$ is naturally isomorphic
to $(E^{-1},h^{-1})$, where we omitted the tensor product sign.
From \eqref{eq:Thetah} we see that 
\[
	\Theta_{h^{\otimes k}} = k \, \Theta_h\quad \text{for all $k\in\Z$}. 
\]
Conversely, if $E=L^{\otimes k}$ $(k\ne 0)$ and $h$ is a hermitian metric on $E$ given in 
charts $U_i\subset X$ by positive functions $h_i$, then 
$h=\tilde h^{\otimes k}$ where $\tilde h$ is a hermitian metric on the line bundle 
$L$ defined by the collection of functions $\tilde h_i=h_i^{1/k}:U_i\to (0,\infty)$.

% 
% OKA TUBES IN TENSOR POWERS 
% 
\begin{proposition}\label{prop:tensor} 
Let $(E,h)$ be a hermitian holomorphic line bundle on a complex manifold $X$. 
\begin{enumerate} [\rm (i)] 
\item 
For every $k\in\N$ there is a surjective fibre preserving holomorphic map 
$\Psi_k:E\to E^{\otimes k}$ such that $\Psi_k(E(0))=E^{\otimes k}(0)$ and the maps
$\Psi_k:\Delta_h^*(E)\to \Delta_{h^{\otimes k}}^*(E^{\otimes k})$ 
and $\Psi_k:D_h(E) \to D_{h^{\otimes k}}(E^{\otimes k})$ 
are unbranched $k$-sheeted holomorphic coverings. 
\item
The punctured disc bundle $\Delta^*_{h}(E)$ is fibrewise biholomorphic 
to the outer tube $D_{h^{*}}(E^*)=\{h^{*}>1\}$ in the dual bundle $(E^*,h^*)$.
\end{enumerate} 
\end{proposition} 

\begin{proof} 
If $E\to X$ is given by a $1$-cocyle $\phi_{i,j}\in \Oscr^*(U_{i,j})$, then 
$E^{\otimes k}$ is given by the $1$-cocyle $\phi_{i,j}^k$. 
Denote by $\Phi_{i,j}(x,t_j) = (x,\phi_{i,j}(x)t_j)$ 
the transition maps in $E$ and by $\Phi_{i,j}^k(x,t_j) = (x,\phi_{i,j}(x)^k t_j)$ 
the associated transition maps in $E^{\otimes k}$. 
We define the map $\Psi_k$ on any chart 
$U_i\times\C$ by $\Psi_k(x,t_i)=(x,t_i^k)$. 
Since $t_i=\phi_{i,j}(x)t_j$ for $x\in U_{i,j}$, we have that 
\[ 
	(\Psi_k \circ\Phi_{i,j})(x,t_j) = \Psi_k(x,\phi_{i,j}(x)t_j) 
	= (x, \phi_{i,j}(x)^k t_j^k) = (\Phi_{i,j}^k\circ \Psi_k)(x,t_j), 
\] 
showing that $\Psi_k:E\to E^{\otimes k}$ is a well-defined 
$k$-sheeted covering projection which is branched 
along $E(0)$, and 
$\Psi_k:E\setminus E(0)\to E^{\otimes k}\setminus E^{\otimes k}(0)$ 
is an unbranched $k$-sheeted covering. From the definition of the 
metric $h^{\otimes k}$ on $E^{\otimes k}$ it follows that 
$\Psi_k:\Delta_h^*(E)\to \Delta_{h^{\otimes k}}^*(E^{\otimes k})$ 
and $\Psi_k:D_h(E) \to D_{h^{\otimes k}}(E^{\otimes k})$ 
are unbranched holomorphic coverings. This proves (i). 

%
% THE INVERSE BUNDLE
%
Part (ii) is seen as follows.
Compactifying each fibre $E_x\cong\C$ $(x\in X)$ with the point at 
infinity yields a holomorphic fibre bundle $\wh E\to X$ with fibre $\CP^1$ 
having a well-defined $\infty$-section $E(\infty)\cong X$. 
Set $\wt E=\wh E\setminus E(0)\to X$. 
If $t\in \C$ is a coordinate on a fibre $E_x$ then $u=t^{-1}$ is a 
coordinate on $\wt E_x$, and the transition functions between the $u$-coordinates are $\phi_{i,j}^{-1}=1/\phi_{i,j}$. 
Hence, $(\wt E,h^{-1})$ is a hermitian holomorphic 
line bundle on $X$, with the zero section $\wt E(0)=E(\infty)$,  
which is naturally isomorphic to the dual line bundle $(E^*,h^*)$. 
Under this identification, the identity map 
on $\wh E$ induces a fibre preserving biholomorphism 
\begin{equation}\label{eq:Inv} 
	\Ical: E\setminus E(0) \to E^*\setminus E^*(0) 
\end{equation} 
mapping $\Delta^*_{h}(E)$ onto $D_{h^*}(E^*)=\{h^*>1\}$
and $D_{h}(E)$ onto $\Delta^*_{h^*}(E^*)$.
\end{proof} 

\begin{corollary}\label{cor:tensor}
Let $(E,h)$ be a hermitian holomorphic line bundle on a complex manifold $X$. 
\begin{enumerate} [\rm (i)] 
\item If the punctured disc bundle $\Delta^*_{h^{\otimes k}}(E^{\otimes k})$ 
is Oka for some $k\in\N$ then it is Oka for all $k\in\N$, and in such case the 
disc bundle $\Delta_{h^{\otimes k}}(E^{\otimes k})$ is Oka-1 for all $k\in\N$. 
\item
$\Delta^*_{h}(E)$ is Oka (resp.\ hyperbolic) if and only if
$D_{h^{*}}(E^*)$ is Oka (resp.\ hyperbolic).
\end{enumerate} 
\end{corollary}

\begin{proof}
All claims except the second statement in part (i) follow from Proposition
\ref{prop:tensor} and the fact that both the class of Oka manifolds and 
the class of hyperbolic manifolds are invariant under covering projections.
If $\Delta_h^*(E)$ is Oka, it is the image of a strongly dominating 
holomorphic map $\C^{n+1}\to \Delta_h^*(E)$ with $n=\dim X$ 
(see \cite{Forstneric2017Indam}). Thus, the disc bundle $\Delta_h(E)$ is 
densely dominable by $\C^{n+1}$, and hence an Oka-1 manifold 
by \cite[Corollary 2.5 (b)]{AlarconForstneric2023Oka1}. 
\end{proof}

Recall that a real function $f$ of class $\Cscr^2$ on a complex manifold $X$ 
is plurisubharmonic if $dd^c f\ge 0$, and is strongly plurisubharmonic if 
$dd^c f>0$. Both conditions generalize to upper semicontinuous functions 
with values in $[-\infty,+\infty)$ 
(see Grauert and Remmert \cite{GrauertRemmert1956}).
The curvature formula \eqref{eq:curvature} for a hermitian metric $h$
leads to the following observation, which we record for reference. 
See also Proposition \ref{prop:positivity-psh} for vector bundles of higher rank.

% 
% PSEUDOCONVEXITY OF THE TUBE
% 
\begin{proposition}\label{prop:psc} 
Let $h$ be a hermitian metric of class $\Cscr^2$ on a holomorphic line 
bundle $E\to X$. The following conditions are equivalent. 
\begin{enumerate}[\rm (i)] 
\item The curvature of $h$ is seminegative: $\imath \Theta_h\le 0$. 
\item The function $\log h$ is plurisubharmonic on $E$. 
\item The disc bundle $\Delta_h(E)=\{h<1\}$ is pseudoconvex along 
$b\Delta_h(E) = \{h=1\}$. 
\end{enumerate} 
Furthermore, if $\imath \Theta_h<0$ then $h$ is strongly plurisubharmonic 
on $E\setminus E(0)$. 
Likewise, the following conditions are equivalent. 
\begin{enumerate}[\rm (i')] 
\item The curvature of $h$ is semipositive: $\imath \Theta_h\ge 0$. 
\item The function $-\log h$ is plurisubharmonic on $E\setminus E(0)$. 
\item The disc bundle $\Delta_h(E)=\{h<1\}$ is pseudoconcave along 
$b\Delta_h(E) = \{h=1\}$. 
\end{enumerate} 
\end{proposition}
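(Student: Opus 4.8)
The plan is to establish the equivalences in the first group (unprimed) and then obtain the second group (primed) by applying the first to a modified setting, or by a direct parallel argument. The core of the matter is a local computation relating the Levi form of $\log h$ on the total space $E$ to the curvature form $\imath\Theta_h$ on the base $X$. First I would work in a holomorphic line bundle chart $(x,t)\in U_i\times\C$, where the metric is $h(x,t)=h_i(x)|t|^2$, so that $\log h = \log h_i(x) + \log|t|^2$. The key observation is that $\log|t|^2$ is pluriharmonic on the fibre coordinate $t\ne 0$ (indeed $\di\dibar\log|t|^2=0$ there), so that
\begin{equation*}
\di\dibar \log h = \di\dibar \log h_i = -\Theta_h,
\end{equation*}
pulled back to $E$ via the projection. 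Thus on the complement of the zero section the complex Hessian of $\log h$ is exactly the pullback of $-\Theta_h$, and $(\mathrm{i})\Leftrightarrow(\mathrm{ii})$ will follow: $\log h$ is plurisubharmonic on $E\setminus E(0)$ precisely when $-\Theta_h\ge 0$, i.e. $\imath\Theta_h\le 0$. To extend plurisubharmonicity of $\log h$ across the zero section (where $\log h\to-\infty$), I would invoke the standard fact that a plurisubharmonic function on the complement of a thin analytic set which is locally bounded above extends; here $\log h$ is clearly bounded above near $E(0)$, so the extension is automatic and $(\mathrm{i})$ holds on all of $E$.

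For the equivalence with pseudoconvexity $(\mathrm{iii})$, I would use the defining function $\rho=\log h=\log h_i+\log|t|^2$ for the boundary $\{h=1\}=\{\rho=0\}$, noting $\rho<0$ defines the disc bundle. Pseudoconvexity along the boundary is the condition that the Levi form of $\rho$ restricted to the complex tangent space $T^{\C}b\Delta_h(E)$ be nonnegative. Since the full complex Hessian of $\rho$ equals $-\Theta_h$ (a form pulled back from the base, hence degenerate in the fibre direction), its restriction to any subspace inherits the sign of $-\Theta_h$; nonnegativity of the Levi form along the boundary is then equivalent to $\imath\Theta_h\le 0$. The remaining clause, that $\imath\Theta_h<0$ forces $h$ itself (not merely $\log h$) to be strongly plurisubharmonic on $E\setminus E(0)$, I would verify by the computation $\di\dibar h = h\,(\di\dibar\log h + \di\log h\wedge\dibar\log h)$, where the first term contributes $-h\,\Theta_h>0$ in the base directions and the second term $\di\log h\wedge\dibar\log h$ is positive semidefinite and contributes strict positivity in the fibre direction (since $\di\log|t|^2\ne0$ for $t\ne 0$); together these give $\di\dibar h>0$ on $E\setminus E(0)$.

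For the primed group, I would obtain it either by a verbatim parallel computation with signs reversed, or more elegantly by transporting the unprimed equivalences under the inversion biholomorphism $\Ical:(E,h)\to(\wt E,h^{-1})$ of \eqref{eq:Inv}, under which $\Theta_{h^{-1}}=-\Theta_h$ by \eqref{eq:Thetah}. Applying $(\mathrm{i})\Leftrightarrow(\mathrm{ii})\Leftrightarrow(\mathrm{iii})$ to the dual metric $h^{-1}$ and translating back via $-\log h = \log(h^{-1})$ yields the three primed statements directly; the pseudoconcavity of $\{h<1\}$ corresponds to pseudoconvexity of $\{h^{-1}<1\}$. I expect the main obstacle to be purely expository rather than mathematical: making the identifications precise, namely that the complex Hessian of $\log h$ on the total space is the \emph{pullback} of a form living on the base (so it is intrinsically degenerate along fibres), and carefully handling the fibre direction $t=0$ where $\log h$ becomes singular. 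The Levi-form restriction to the boundary in $(\mathrm{iii})$ requires attention because one must confirm that the degeneracy of $-\Theta_h$ in the fibre direction does not interfere with the sign condition on the complex tangent space of the boundary; this is the one place where I would check that no spurious negative eigenvalue can appear, but it follows from the form being a pullback from the base.
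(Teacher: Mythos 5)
Your proposal is correct, and its core is the same as the paper's: work in a local trivialization where $h(x,t)=h_i(x)|t|^2$, use pluriharmonicity of $\log|t|^2$ off the zero section to identify $\di\dibar\log h$ with the pullback of $\di\dibar\log h_i=-\Theta_h$, and read the equivalences off this identity. The differences are local, and worth recording. For (i) $\Leftrightarrow$ (iii) the paper argues via Hartogs domains: (i) gives that $h=\E^{\log h}$ is plurisubharmonic, so the sublevel set $\{h<1\}$ is pseudoconvex, and conversely pseudoconvexity of the Hartogs domain $\{|t|^2\E^{\log h_i(x)}<1\}$ forces $\log h_i$ to be plurisubharmonic (the classical Hartogs radius theorem); you instead restrict the Levi form of $\rho=\log h$ to the complex tangent spaces of $\{h=1\}$. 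Your forward direction is immediate, but the converse needs one line that you only gesture at: since the complex Hessian of $\rho$ is a pullback, its value on $(v,v_t)$ depends only on the base component $v$, and for every $x$ and every $v\in T_xX$ there is a boundary point over $x$ (take $|t|^2=1/h_i(x)$) and a fibre component $v_t=-t\,\di\log h_i(v)$ making $(v,v_t)$ complex tangent to the boundary; this is exactly what converts boundary pseudoconvexity into $\imath\Theta_h\le 0$ at \emph{every} point of $X$, not just at points seen by the tangent spaces. Also, your appeal to a plurisubharmonic extension theorem across $E(0)$ is unnecessary: locally $\log h$ is the sum of the plurisubharmonic functions $\log h_i\circ\pi$ and $\log|t|^2$ (the latter plurisubharmonic including its value $-\infty$ at $t=0$), so (i) holds on all of $E$ with no extension argument. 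Your treatment of the final clause via $\di\dibar h = h\,(\di\dibar\log h + \di\log h\wedge\dibar\log h)$ makes explicit the fibre-direction positivity that the paper leaves implicit, which is a small plus. Finally, for the primed group the paper simply repeats the computation with reversed signs, whereas your transport under the inversion $\Ical:(E,h)\to(\wt E,h^{-1})$ of \eqref{eq:Inv}, using $\Theta_{h^{-1}}=-\Theta_h$, is a clean alternative consistent with the machinery the paper itself sets up; either route is fine.
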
 

\begin{proof} 
The equivalence (i) $\Leftrightarrow$ (ii) is an immediate consequence of 
the curvature formula \eqref{eq:curvature}. 
Assume now that $U$ is a Stein domain in $X$ such that 
$E|_U\cong U\times \C$ is a trivial line bundle. 
On this chart we have $h(x,t)=\xi(x)|t|^2$ for some 
positive $\Cscr^2$ function $\xi$ on $U$, and 
\begin{equation}\label{eq:DeltaHartogs} 
	\Delta_h(E)|_U = \{(x,t)\in U\times \C: h(x,t)<1\} 
	= \{(x,t) \in U\times \C: |t|^2 \E^{\log \xi(x)} <1\} 
\end{equation} 
is a Hartogs domain in $U\times \C$. If $\log h$ is plurisubharmonic 
(condition (ii) holds) then so is $h$, and hence $\Delta_h(E)|_U$ is 
pseudoconvex. The converse is also well-known and easily seen: if the 
Hartogs domain \eqref{eq:DeltaHartogs} is pseudoconvex then $\log\xi$ is plurisubharmonic on $U$, and hence $\log h$ is plurisubharmonic on $E|_U$.
This proves (ii) $\Leftrightarrow$ (iii). 
If $\imath \Theta_h<0$ then $\log \xi$ and hence $\xi$ are strongly 
plurisubharmonic, so $h$ is strongly plurisubharmonic 
on $E\setminus E(0)$. The equivalences 
(i')\ $\Leftrightarrow$\ (ii')\ $\Leftrightarrow$\ (iii') are proved in the same 
way and we leave out the details. 
\end{proof}

% 
% SPECIAL HERMITIAN LINE BUNDLES ONB PROJECTIVE SPACES
% 
\begin{example}[Special hermitian line bundles on projective spaces] \label{ex:FB} 
Let $z=(z_0,z_1,\ldots,z_n)$ be Euclidean coordinates on $\C^{n+1}$ 
and $[z]=[z_0:z_1:\cdots:z_n]$ the associated homogeneous coordinates 
on $\CP^n$. On the affine chart $U_i=\{[z]\in\CP^n: z_i\ne 0\}\cong \C^n$ 
$(i=0,1,\ldots,n)$ we have the affine coordinates $z^i=(z_0/z_i,\ldots,z_n/z_i)$, 
where the term $z_i/z_i=1$ omitted. Fix $k\in\Z$ and define a hermitian metric 
$h$ on $E=\Oscr_{\CP^n}(k)$ by 
\begin{equation}\label{eq:h} 
	h([z],t)= \frac{|t|^2}{(1+|z^i|^2)^k} = \frac{|z_i|^{2k}}{|z|^{2k}} |t|^2 
	\quad \text{for $[z]\in U_i$ and $t\in\C$}. 
\end{equation} 
The transition functions on $\Oscr_{\CP^n}(k)$ are 
$\phi_{i,j}([z])=(z_j/z_i)^k$ (see \cite[p.\ 18]{Wells2008}). 
In view of \eqref{eq:transition-h} we see that $h={\tilde h}^{\otimes k}$,  
where $\tilde h$ is the metric on $\Oscr_{\CP^n}(1)$ given by \eqref{eq:h} 
with $k=1$. It follows from \eqref{eq:curvature} and \eqref{eq:h} that 
$
	\imath\Theta_h = k \, \imath\, \di\dibar \log\bigl(|z|^{2}\bigr), 
$
which is $k$-times the Fubini--Study form on $\CP^n$. 
Identifying $U_i$ with $\C^n$, the disc tube of the bundle 
$E=\Oscr_{\CP^n}(k)$ with the metric \eqref{eq:h},
restricted to $U_i$, is given by 
\begin{equation}\label{eq:discbundlek} 
	\Delta_h(E)|_{U_i} =\big\{(z,t)\in \C^n\times\C: |t|<(1+|z|^2)^{k/2}\big\}. 
\end{equation} 
This is a Hartogs domain whose radius is of order $|z|^{k}$ as $|z|\to\infty$. 
Since any two hermitian metrics on $E$ are comparable, 
the disc bundle of any hermitian metric on $E$ grows at this rate.
\end{example} 

% 
% HERMITIAN VECTOR BUNDLES OF ARBITRARY RANK, 
% GRIFFITHS NEGATIVITY 
% 
We now recall the notions of Griffiths (semi) positivity and Griffiths 
(semi) negativity of a hermitian holomorphic vector bundle $(E,h)$ of arbitrary 
rank $r\ge 1$ on a complex manifold $X$ of dimension $n$ 
(see Griffiths \cite{Griffiths1965,Griffiths1969}). 
The hermitian metric $h$ on $E$ is given in any local frame 
$(e_1,\cdots,e_r)$ by a hermitian matrix function $h=(h_{\lambda\mu})$ with 
\[ 
	h_{\lambda\mu}(x)= 
	(e_\mu(x),e_\lambda(x))_h \quad \text{for}\ \lambda,\mu=1,\ldots,r. 
\] 
Its connection matrix $\theta_h$ and the curvature form $\Theta_h$ 
are given in any local holomorphic frame by 
\[ 
	\theta_h=h^{-1}\di h, \qquad 
	\Theta_h=\dibar \,\theta_h = 
	-h^{-1}\di\dibar h + h^{-1}\di h \wedge h^{-1}\dibar h. 
\] 
(See \cite[Chapter V]{Demailly-book} or \cite[Chapter III]{Wells2008}.) 
For a line bundle, these equal $\theta_h= h^{-1}\di h = \di\log h$ and 
$\Theta_h= - \di\dibar \log h$ (cf.\ \eqref{eq:Thetah}). 
In local holomorphic coordinates $z=(z_1,\ldots,z_n)$ on $X$ and a local 
frame $(e_1,\ldots,e_r)$ on $E$, we can identify the curvature tensor 
\[ 
	\imath\Theta_h=\sum_{\nad{i,j=1,\ldots,n}{\lambda,\mu=1,\ldots,r}} 
	c_{ij\lambda\mu} dz_i\wedge d\bar z_j \,\cdotp e^*_\lambda \otimes e_\mu 
\] 
with the hermitian form on $TX\otimes E$ given by 
\[ 
	\widetilde\Theta_h(\xi\otimes v) 
	=\sum_{\nad{i,j=1,\ldots,n}{\lambda,\mu=1,\ldots,r}} 
	c_{ij\lambda\mu} \xi_i \bar{\xi}_j v_\lambda \bar{v}_\mu. 
\] 
The following notions are due to Griffiths \cite{Griffiths1965,Griffiths1969}; 
see also \cite{AndreottiGrauert1962} and \cite[Chapter VII]{Demailly-book}. 

% 
% 
% SEMINEGATIVITY OF A VECTOR BUNDLE 
% 
% 
\begin{definition}\label{def:Griffiths-positive} 
Let $E\to X$ be a holomorphic vector bundle.  A hermitian metric $h$ on $E$ is 
{\em Griffiths semipositive} (resp.\ {\em Griffiths seminegative}) if 
$\widetilde\Theta_h(\xi\otimes v)\geq 0$ 
(resp.\ $\widetilde\Theta_h(\xi\otimes v)\leq 0$) 
for all $\xi\in T_{x}X$ and $v\in E_x$ ($x\in X$). 
If there is strict inequality for all $\xi\in T_{x}X\setminus\{0\}$ and 
$v\in E_x\setminus\{0\}$ ($x\in X$) then the metric is {\em Griffiths positive} 
(resp.\ {\em Griffiths negative}). 
\end{definition}

For line bundles, Griffiths positivity (resp.\ negativity) 
coincides with the previous definition. The following proposition explains 
the connection between Griffiths seminegativity of a hermitian metric and plurisubharmonicity of the associated squared norm function. 

% 
% CURVATURE AND PSEUDOCONVEXITY 
% 
\begin{proposition}\label{prop:positivity-psh} 
For a hermitian metric $h$ on a holomorphic vector bundle $E\to X$ 
the following conditions are equivalent: 
\begin{enumerate}[\rm (i)] 
\item The metric $h$ is Griffiths seminegative. 
\item For any local holomorphic section $u$ of $E$, the function $|u|_h^2$ is plurisubharmonic. 
\item For any local holomorphic section $u$ of $E$, the function $\log|u|^2_h$ is plurisubharmonic. 
\item The squared norm function $\phi(e)=|e|_h^2$ is plurisubharmonic on $E$. 
\item The function $\log\phi(e)=\log|e|_h^2$ is plurisubharmonic on $E$. 
\end{enumerate} 
If $h$ is Griffiths negative then the function $\phi$ in (iv) is strongly plurisubharmonic
on $E\setminus E(0)$.
\end{proposition}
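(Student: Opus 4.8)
The plan is to treat the chain (i) $\Leftrightarrow$ (ii) $\Leftrightarrow$ (iii) as input, since this is exactly the content of Raufi's analysis \cite{Raufi2015} relating the Chern curvature to plurisubharmonicity of the squared norm of local holomorphic sections. The engine behind those equivalences is the pointwise identity
\[
\imath\,\di\dibar|u|_h^2=\imath\{D'u,D'u\}_h-\langle \imath\,\widetilde\Theta_h u,u\rangle_h
\]
for a local holomorphic section $u$ (here $D'$ is the $(1,0)$-part of the Chern connection), combined with the observation that, given any point $x_0$ and any value $v\in E_{x_0}$, one may choose $u$ with $u(x_0)=v$ and $D'u(x_0)=0$; this kills the (always nonnegative) first term and makes the sign of the curvature term detectable by the plurisubharmonicity of $|u|_h^2$, which is precisely Griffiths seminegativity. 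What genuinely remains for me to supply is the passage between statements on the base $X$ and statements on the total space $E$, that is, (ii) $\Leftrightarrow$ (iv) and (iii) $\Leftrightarrow$ (v).

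The easy directions (iv) $\Rightarrow$ (ii) and (v) $\Rightarrow$ (iii) I would dispatch in one line: a local holomorphic section $u$ of $E$ over $U\subset X$ is the same thing as a holomorphic map $\sigma_u\colon U\to E$, $x\mapsto u(x)$, and $\phi\circ\sigma_u=|u|_h^2$, $\,(\log\phi)\circ\sigma_u=\log|u|_h^2$. Since the pullback of a plurisubharmonic function under a holomorphic map is plurisubharmonic, restricting $\phi$ (resp.\ $\log\phi$) to the image $\sigma_u(U)$ yields plurisubharmonicity of $|u|_h^2$ (resp.\ $\log|u|_h^2$) with no computation; note this direction handles the points where $u$ vanishes automatically.

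For the converse (ii) $\Rightarrow$ (iv), I would work in a local holomorphic trivialization $E|_U\cong U\times\C^r$ with fibre coordinates $t=(t^1,\ldots,t^r)$, so that $\phi(x,t)=\sum_{\lambda,\mu}h_{\lambda\mu}(x)t^\mu\overline{t^\lambda}$ is smooth, and plurisubharmonicity is the nonnegativity of its Levi form at every $(x_0,t_0)$ in every direction $(\xi,\eta)\in\C^n\times\C^r$. The key remark is that when $\xi\ne 0$ the direction $(\xi,\eta)$ is tangent to the graph of an affine holomorphic section: choosing a linear $A\colon\C^n\to\C^r$ with $A\xi=\eta$ and setting $u(x)=t_0+A(x-x_0)$ gives a holomorphic section through $(x_0,t_0)$ whose graph is a complex submanifold; since $\di\dibar$ commutes with holomorphic pullback, the Levi form of $\phi$ in the direction $(\xi,\eta)$ equals the Levi form of $|u|_h^2$ at $x_0$ in the direction $\xi$, which is nonnegative by (ii). In the remaining case $\xi=0$ the Levi form is just $\sum_{\lambda,\mu}h_{\lambda\mu}(x_0)\eta^\mu\overline{\eta^\lambda}=|\eta|_h^2\ge 0$ because $h$ is positive definite. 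As every direction falls into one of these two cases, $\phi$ is plurisubharmonic on $E$; the identical argument applied to $\log\phi$ proves (iii) $\Rightarrow$ (v) on $E\setminus E(0)$.

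The one genuinely delicate point — the step I expect to need the most care — is the behaviour of $\log\phi$ along the zero section $E(0)$, where $\phi$ vanishes and $\log\phi=-\infty$, so no Levi-form computation is available there. Here I would not compute anything but invoke the removable-singularity theorem for plurisubharmonic functions across an analytic set (see \cite[Chapter I]{Demailly-book}): $\log\phi$ is plurisubharmonic on $E\setminus E(0)$, is locally bounded above near the closed analytic subset $E(0)$ (indeed it tends to $-\infty$ there), and hence extends to a plurisubharmonic function on all of $E$, which then necessarily equals $-\infty$ on $E(0)$. This closes the gap and yields (iii) $\Rightarrow$ (v) on the whole total space, completing the circle of equivalences.
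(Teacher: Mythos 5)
Your proposal is correct and takes essentially the same route as the paper, which likewise delegates (i) $\Leftrightarrow$ (ii) $\Leftrightarrow$ (iii) to \cite[Sect.~2]{Raufi2015} and then simply declares (ii) $\Leftrightarrow$ (iv) and (iii) $\Leftrightarrow$ (v) to be obvious. Your affine-section argument for mixed directions, the direct fibre-direction computation, and the removable-singularity (equivalently, trivial sub-mean-value at $-\infty$ points) treatment of the zero section are precisely the standard details behind that ``obvious'', so there is nothing to correct.
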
 

The equivalences between (i), (ii) and (iii) can be found in 
Raufi \cite[Sect.\ 2]{Raufi2015}. The equivalences (ii) $\Leftrightarrow$ (iv) 
and (iii) $\Leftrightarrow$ (v) are obvious. For the last statement, 
see \cite[Proposition 6.2]{DrinovecForstneric2010AJM}.

% 
% REMARK: 
% 
\begin{remark}\label{rem:continuousGriffiths} 
The conditions (ii) and (iii) in Proposition \ref{prop:positivity-psh} are 
equivalent also for a {\em continuous} hermitian metric on a holomorphic 
vector bundle, and they can be used to define Griffiths seminegativity and 
semipositivity for a not necessarily smooth hermitian metric 
(see \cite[Definition 1.2 and Sect.\ 2]{Raufi2015}). 
The equivalences (ii) $\Leftrightarrow$ (iv) and (iii) $\Leftrightarrow$ (v) are obvious 
in the continuous case as well. For a metric of class $\Cscr^2$, the relationship 
between the eigenvalues of the curvature form and those of 
the Levi form of the squared norm function can be found 
in Griffiths \cite[p.\ 426]{Griffiths1966}; see also the summary in 
\cite[Proposition 6.2]{DrinovecForstneric2010AJM}. 
In the notation used in the latter paper, Griffiths seminegativity can be 
expressed by $s(e)=0$ for every $e\in E\setminus E(0)$. 
\end{remark} 

% 
%  RESULTS FROM OKA THEORY
%

In the final part of this section we recall some notions from Oka theory 
and a couple of results which are frequently used in the sequel.
We begin by recalling the notion of Oka property of a holomorphic map and of
{\em Oka map}; see \cite[Definitions 7.4.1 and 7.4.7]{Forstneric2017E}
where this is called the {\em parametric Oka property with approximation 
and interpolation}, abbreviated POPAI. 

%
%  OKA MAPS
%
\begin{definition}\label{def:Okamap} 
A holomorphic map $\pi: Y \to Z$ of reduced complex spaces has 
the {\em Oka property} 
if holomorphic maps $f:X\to Z$ from any Stein manifold $X$ 
satisfy the parametric $h$-principle for liftings $F:X\to Y$ with $\pi \circ F=f$.
The map $\pi:Y\to Z$ is an {\em Oka map} if it satisfies the Oka property 
and is a topological (Serre) fibration.
\end{definition}

More precisely, the Oka property of the map $\pi:Y\to Z$ means
that every continuous lifting $F_0:X\to Y$ of a given 
holomorphic map $f:X\to Z$ is homotopic through liftings of $f$
to a holomorphic lifting $F:X\to Y$. Furthermore, if $F_0$ is holomorphic 
on a compact $\Oscr(X)$-convex subset $K\subset X$ and on a closed complex
subvariety $X'\subset X$, then the homotopy of liftings $F_t:X\to Y$ $(t\in[0,1])$ 
can be chosen such that every map $F_t$ is holomorphic on 
$K\cup X'$, it agrees with $F_0$ on $X'$, and it approximates $F_0$ 
uniformly on $K$ and uniformly in the parameter $t\in[0,1]$.
The analogous conditions hold for any continuous family
of holomorphic maps $f_p:X\to Z$ depending on a parameter $p$ 
in a compact Hausdorff space. 

For a holomorphic submersion $\pi:Y\to Z$, the basic Oka property implies
the parametric Oka property (see \cite[Theorem 7.4.3]{Forstneric2017E}).
If $\pi: Y \to Z$ is an Oka map of complex manifolds with $Z$ 
connected then $\pi$ is a surjective submersion, its fibres 
are Oka manifolds (see \cite[Proposition 3.14]{Forstneric2023Indag}), 
and $Y$ is an Oka manifold if and only if $Z$ is an Oka manifold
(see \cite[Theorem 3.15]{Forstneric2023Indag}). 

The following result is due to 
Kusakabe \cite[Lemma 5.1]{Kusakabe2024}; 
see also \cite[Proposition 3.18]{Forstneric2023Indag}.

%
%    OKA PROJECTIONS IMPLY OKA
%
\begin{proposition} \label{prop:projections}
Assume that for every point $y$ in a complex manifold $Y$ there exist complex 
manifolds $Z_1,\ldots, Z_k$ and holomorphic submersions 
$\pi_j:Y\to Z_j$ $(j=1,\ldots,k)$ enjoying the Oka property such that $T_y Y = \sum_{j=1}^k \ker (d\pi_j)_y$.
Then $Y$ is an Oka manifold.
\end{proposition}

An unbounded closed set $S$ in a complex manifold $Y$ is called 
holomorphically convex (or $\Oscr(Y)$-convex) if $S$ is the union of 
an increasing sequence of compact $\Oscr(Y)$-convex sets.

%
%  FAMILY OF COMPACT HOLO CONVEX SETS
%
\begin{definition}[Definition 4.1 in \cite{Kusakabe2024}]
\label{def:family} 
Let $\pi:Y\to Z$ be a holomorphic submersion. 
A closed subset $S$ of $Y$ is called a 
\emph{family of compact holomorphically convex sets} if the restriction 
$\pi|_{S}:S\to Z$ is proper and each point of $Z$ admits an open  
neighbourhood $U\subset Z$ such that the set 
$S\cap\pi^{-1}(U)$ is $\Oscr(\pi^{-1}(U))$-convex.
\end{definition}

The following is a special case of \cite[Theorem 4.2]{Kusakabe2024}
which is used in this paper. The notion of the density property was introduced
in Definition \ref{def:density}. 

%
%  KUSAKABE, COMPLEMENTS, THEOREM 4.2 (SIMPLIFIED)
%
\begin{theorem}\label{th:complements4.2}
Let $\pi:Y\to Z$ be a holomorphic fibre bundle whose fibre is a Stein manifold
with the density property, and let $S\subset Y$ be a family of compact 
holomorphically convex sets. Then the restriction 
$\pi|_{Y\setminus S}:Y\setminus S\to Z$ enjoys the Oka property.
\end{theorem}

In \cite[Theorem 4.2]{Kusakabe2024} it is assumed that the map $\pi:Y\to Z$ 
is a holomorphic submersion and each point of $Z$ admits an open 
neighborhood $U\subset Z$ such that $\pi^{-1}(U)$ is Stein and the 
restriction $\pi^{-1}(U)\to U$ enjoys the fibred density property. 
When $\pi:Y\to Z$ is a holomorphic fibre bundle, the latter condition 
clearly holds if the fibre is Stein and has the density property.

% 
% 
% PROOF OF THE MAIN RESULTS
%
%
\section{Proofs of the main results}
\label{sec:proofs}
 
In this section, we prove Theorems \ref{th:projective}, \ref{th:density1}, 
\ref{th:density2}, and \ref{th:negative}. We also obtain Theorem \ref{th:K1.6bis}. 

\begin{proof}[Proof of Theorem \ref{th:projective}]
We begin by considering the hyperplane section bundle $\Oscr_{\CP^n}(1)$. 
Its total space $E$ can be identified with $\CP^{n+1}\setminus \{0\}$, 
where $0\in \C^{n+1}$ is an affine chart in $\CP^{n+1}$, such that the
zero section $E(0)$ is the hyperplane at infinity 
$\CP^{n+1}\setminus \C^{n+1}\cong\CP^n$ and the fibres of the projection 
$\pi:E\to \CP^n$ are the punctured complex lines through 
the origin $0\in \C^{n+1}$, with the added point at infinity. 
Let $h$ be a semipositive hermitian metric on $E$, $\imath\Theta_h\ge 0$.  
Proposition \ref{prop:psc} shows that the function $1/h$ is plurisubharmonic
on $E\setminus E(0)=\C^{n+1}\setminus \{0\}$. Clearly, this function
tends to infinity at $E(0)$ and to $0$ at the origin $0\in \C^{n+1}$,
so it extends to a plurisubharmonic exhaustion function on $\C^{n+1}$.
Therefore, the set $K=\{1/h\le 1\}=\{h\ge 1\}$ is a compact polynomially convex
neighbourhood of the origin (see Stout \cite[Theorem 1.3.11]{Stout2007}).
Note that $\Delta^*_h(E)=\C^{n+1}\setminus K$, which is Oka by 
\cite[Corollary 1.3]{Kusakabe2024}
(see also \cite[Theorem 1.2]{ForstnericWold2020MRL}), 
$\Delta_h(E)=\CP^{n+1}\setminus K$, which is Oka by 
\cite[Corollary 5.2]{Forstneric2023Indag}, and 
$D_h(E)=\mathring K\setminus\{0\}$ is a bounded domain 
in $\C^{n+1}$, hence Kobayashi hyperbolic.
If $H$ is a complex hyperplane in $\CP^{n+1}$ then 
$\CP^{n+1}\setminus (H\cup K)$ is Oka by 
\cite[Theorem 1.3]{ForstnericWold2024IMRN}. This shows that 
for any affine chart $\C^n\cong U\subset\CP^n$ the 
disc bundle $\Delta_h(E)|_U$ is Oka. 
This proves the theorem for $E=\Oscr_{\CP^n}(1)$. 

For its dual bundle $E^*=\U=\Oscr_{\CP^n}(-1)$, the universal bundle on 
$\CP^n$, parts (a') and (b') of the theorem follow immediately from the results 
for $E=\Oscr_{\CP^n}(1)$ in view of Proposition \ref{prop:tensor} (ii).
Indeed, the total space of $\U$ is biholomorphic to $\C^{n+1}$ blown up at 
the origin, its zero section $\U(0)$ is the exceptional fibre 
over $0\in\C^{n+1}$, the fibres of the projection $\pi: \U\to\CP^n$ 
are the complex lines $\C z$ for $z\in \C^{n+1}\setminus \{0\}$, 
and $\U\setminus \U(0)$ is biholomorphic to $\C^{n+1}\setminus \{0\}$. 
If $h$ is a seminegative hermitian metric on $\U$ then $1/h$ is a
semipositive hermitian metric on $E=\CP^{n+1}\setminus \{0\}$, 
$K=\{h\le 1\}$ is a compact polynomially convex neighbourhood of the origin
blown up at the origin, the domain $D_h(\U)=\{h>1\}=\Delta^*_{1/h}(E)$ is 
Oka, and the domain $\Delta_h^*(\U)=\{0<h<1\}=\{1/h>1\}=D_{1/h}(E)$
is hyperbolic. 

For the tensor powers $E^{\otimes k}=\Oscr_{\CP^n}(k)$ with $k>1$,
parts (a) and (c) follow from the already proved 
result for $E=\Oscr_{\CP^n}(1)$ by Corollary \ref{cor:tensor}. 
Likewise, the proofs of (a') and (b') for
$\U^{\otimes k}=\Oscr_{\CP^n}(-k)$ with $k>1$ follow from the
case for $\U$ by Corollary \ref{cor:tensor}.

It remains to prove part (b) for semipositive bundles $(E,h)$ with 
$E=\Oscr_{\CP^n}(k)$ when $k\ge 1$ and $n\ge 2$.
The key to the proof is the following result of independent interest.
The main idea used in this proposition will also be applied in the proofs of Theorems \ref{th:density1} and \ref{th:density2}.

% 
% OKA HARTOGS DOMAINS 
% 
\begin{proposition} \label{prop:Hartogs} 
Assume that $\phi$ is a positive continuous function on $\C^n$ $(n\ge 2)$ 
such that $\log \phi$ is plurisubharmonic, and there is a constant $c>0$ such 
that $\phi(z)\ge c\,|z|$ holds for all $z\in\C^n$. Then, the (pseudoconcave) 
Hartogs domain 
\begin{equation}\label{eq:Hartogs} 
	\Omega=\{(z,t)\in \C^n\times\C: |t|<\phi(z)\}
\end{equation} 
is an Oka domain. 
\end{proposition}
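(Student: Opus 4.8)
The plan is to exhibit $\Omega$ as the complement of a holomorphically convex set and then invoke Kusakabe's theorem on the Oka property of such complements \cite[Theorems 1.6 and 4.2]{Kusakabe2020complements}. First I would pass to the complement $K=\C^{n+1}\setminus\Omega=\{(z,t):|t|\ge\phi(z)\}$. Because $\phi>0$ everywhere, every point of $K$ has $|t|\ge\phi(z)>0$, so $K$ is a closed subset of the Stein manifold $W=\C^{n}\times\C^*$, which has the density property, and $\C^{n+1}\setminus W=\Lambda_0=\C^n\times\{0\}$ is contained in $\Omega$. (Note that $\Omega$ is pseudoconcave along $\{|t|=\phi(z)\}$ precisely because $\log\phi$ is plurisubharmonic, which is the analogue for Hartogs domains of the computation in Proposition \ref{prop:psc}; the holomorphically convex object is therefore the complement $K$, not $\Omega$.) The two hypotheses to verify for Kusakabe's machinery in this relative setting are: (1) $K$ is $\Oscr(W)$-convex; and (2) every complex hyperplane sufficiently close to $\Lambda_0$ is disjoint from $K$.

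For step (1), the assumption that $\log\phi$ is plurisubharmonic is exactly what is needed. On $W$ the function $v(z,t)=\log\phi(z)-\log|t|$ is plurisubharmonic, being the sum of the plurisubharmonic function $\log\phi(z)$ and the pluriharmonic function $-\log|t|$ (recall $\di\dibar\log|t|^2=0$ on $\C^*$), and one has $K=\{v\le 0\}$. Fix a smooth strictly plurisubharmonic exhaustion of $W$, for instance $\sigma(z,t)=|z|^2+|t|^2+|t|^{-2}$, whose sublevel sets are compact and $\Oscr(W)$-convex. For $c>0$ set $K_c=\{(z,t)\in W:\max(v(z,t),\sigma(z,t)-c)\le 0\}=K\cap\{\sigma\le c\}$. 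Each $K_c$ is compact and is the sublevel set $\{w_c\le 0\}$ of the plurisubharmonic function $w_c=\max(v,\sigma-c)$; since plurisubharmonic functions attain their supremum on the $\Oscr(W)$-convex hull already on the set itself, the hull of $K_c$ lies in $\{w_c\le\sup_{K_c}w_c\}\subseteq\{w_c\le 0\}=K_c$, so $K_c$ is $\Oscr(W)$-convex. As $c\to\infty$ the sets $K_c$ increase to $K$, whence $K$ is $\Oscr(W)$-convex. (The very same maximum principle shows that a convex $S$ is $\Oscr$-convex, which accounts for the parenthetical claim in Theorem \ref{th:K1.6bis}.)

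For step (2), a hyperplane near $\Lambda_0$ is a graph $\Lambda=\{t=a+\langle b,z\rangle\}$ with $(a,b)\in\C\times\C^n$ small, and disjointness $\Lambda\cap K=\varnothing$ means $|a+\langle b,z\rangle|<\phi(z)$ for all $z\in\C^n$. Here the linear growth $\phi(z)\ge c|z|$ is precisely what I would use: for $|b|<c$ one has $|a+\langle b,z\rangle|\le|a|+|b|\,|z|<c|z|\le\phi(z)$ once $|z|$ is large, while on any fixed ball $\{|z|\le R\}$ the continuous function $\phi$ is bounded below by a positive constant, so the inequality also holds there provided $(a,b)$ is small enough. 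Thus all hyperplanes close to $\Lambda_0$ avoid $K$. It is worth noting that sublinear growth would fail this test for every tilted $\Lambda$, so the linear lower bound is the natural hypothesis.

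The hard part will be the passage across the hypersurface $\Lambda_0$. Applying Kusakabe's complement theorem inside $W$ directly yields only that $W\setminus K=\{0<|t|<\phi(z)\}=\Omega\setminus\Lambda_0$ is Oka, and one must promote this to Oka-ness of $\Omega$ itself, which \emph{contains} $\Lambda_0$; a naive attempt to cover $\Lambda_0$ by Zariski-open pieces $\Omega\setminus\Lambda$ fails because $K$ is not a sublevel set of $\log\phi-\log|t'|$ in the tilted coordinate $t'$. This is exactly where hypothesis (2) must be used in full strength: the nearby hyperplanes $\Lambda\subset\Omega$ sweep out a neighbourhood of $\Lambda_0$, and feeding this into the relative form of Kusakabe's theorem \cite[Theorem 4.2]{Kusakabe2020complements} should let one interpolate across $\Lambda_0$ and conclude that $\C^{n+1}\setminus K=\Omega$ is Oka. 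I expect this bookkeeping, rather than the convexity verification, to be the delicate step, and it is also where the dimension restriction $n\ge 2$ (ambient dimension $\ge 3$) is genuinely needed: the corresponding statement fails in $\C^2$.
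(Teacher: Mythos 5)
Your preparatory steps are correct and coincide with the first half of the paper's proof: the complement $S=\C^{n+1}\setminus\Omega$ lies in $W=\C^n\times\C^*$ and is the zero sublevel set of the plurisubharmonic function $\psi(z,t)=\log\phi(z)-\log|t|$, its truncations are compact and $\Oscr(W)$-convex, and the linear growth of $\phi$ forces every hyperplane close to $\Lambda_0=\{t=0\}$ to miss $S$. However, the proposal stops exactly where it must deliver. The assertion that the nearby hyperplanes ``sweep out a neighbourhood of $\Lambda_0$'' and that feeding this into \cite[Theorem 4.2]{Kusakabe2020complements} ``should let one interpolate across $\Lambda_0$'' does not correspond to anything that theorem provides. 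What it provides is a \emph{fibered} Oka property (POPAI) for the projection $\pi:(\C^n\times\C^*)\setminus S\to\C^*$, $\pi(z,t)=t$; for this one also needs that $\pi|_S:S\to\C$ is proper, a point you never verify (it follows from $\phi(z)\ge c|z|$), and it is here — in the fibres of $\pi$, which are copies of $\C^n$ — that the hypothesis $n\ge 2$ is used, not at the crossing of $\Lambda_0$. The crossing itself is in fact the easy part: since $\inf_{\C^n}\phi>0$, the set $S$ avoids an entire neighbourhood of $\{t=0\}$, over which $\pi$ is a trivial bundle with fibre $\C^n$, so the localization principle for the Oka property of submersions (\cite[Theorem 7.4.4]{Forstneric2017E}, or \cite[Theorem 4.1]{Kusakabe2021MZ}) upgrades POPAI to the full projection $\pi:\C^{n+1}\setminus S\to\C$.

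The genuinely missing idea is how to pass from this fibered property to Oka-ness of $\Omega$. A single submersion with POPAI onto $\C$ does not do it: $\pi$ is not a topological fibre bundle (its fibres change), hence not an Oka map, so one cannot invoke the implication ``Oka base $\Rightarrow$ Oka total space''. The paper's mechanism is to run the whole argument a \emph{second} time for a linearly independent projection $\pi_\Lambda:\C^{n+1}\to\C$ with $\pi_\Lambda^{-1}(0)=\Lambda$ a hyperplane close to $\Lambda_0$. This requires two facts absent from your proposal: that $S$ is $\Oscr(\C^{n+1}\setminus\Lambda)$-convex — which follows by applying \cite[Corollary A.5]{Forstneric2023Indag} to the truncations $S_r$, so your dismissal of the tilted coordinate as intractable is unfounded — and that $\pi_\Lambda|_S$ is still proper (again from linear growth). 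Then \cite[Theorem 4.2]{Kusakabe2020complements} gives POPAI for $\pi_\Lambda$ on $\C^{n+1}\setminus S$ as well, and the decisive step is \cite[Lemma 5.1]{Kusakabe2020complements} (see also \cite[Proposition 3.18]{Forstneric2023Indag}): two holomorphic submersions with POPAI whose fibre directions span the tangent space at every point force the total space to be Oka. Without this lemma, or a substitute for it, your argument proves only that $\Omega\setminus\Lambda_0$ is Oka, and there is no general principle allowing one to adjoin a hypersurface to an Oka domain and remain Oka.
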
 

\begin{proof} 
Let $T:\C^{n+1}=\C^n\times \C \to\C$ denote the projection $T(z,t)=t$. 
Consider the closed set 
\begin{eqnarray*} 
	S &=& \C^{n+1}\setminus \Omega =\{(z,t)\in \C^n\times \C:|t|\ge \phi(z)\} \\ 
	&=& \{(z,t)\in \C^n\times \C^*: \log\phi(z) -\log|t| \le 0\}. 
\end{eqnarray*} 
Since $\log|t|$ is harmonic on $t\in \C^*$, the function 
$\psi(z,t)=\log\phi(z) -\log|t|$ is plurisubharmonic on $\C^n\times \C^*$. 
Since $\phi$ is assumed to grow at least linearly near infinity, 
the restricted projection $T|_S:S\to\C$ is proper. It follows that for 
every $r>0$ the set 
\begin{equation}\label{eq:Kt1t2} 
	S_{r}= \{(z,t)\in S: |t|\le r\}=\{(z,t)\in \C^n\times \C^*: 
			\psi(z,t)\le 0,\ \log|t|\le \log r\} 
\end{equation} 
is compact and $\Oscr(\C^n\times \C^*)$-convex 
(see \cite[Theorem 1.3.11]{Stout2007}). 
Since the fibre of the map $T$ is $\C^n$ with $n\ge 2$, which has the 
density property, Theorem \ref{th:complements4.2} implies that the 
restricted projection $T:(\C^{n}\times \C^*)\setminus S\to\C^*$ has the 
Oka property. Since $S\cap \{t=0\}=\varnothing$, the projection 
$T:\C^{n+1}\setminus S\to\C$ has the Oka property as well 
(see \cite[Theorem 4.1]{Kusakabe2021MZ}, or use the localization principle 
for the Oka property of a holomorphic submersion, 
given by \cite[Theorem 7.4.4]{Forstneric2017E} and originally proved in 
\cite[Theorem 4.7]{Forstneric2010CA}). 

Since the function $\phi$ in \eqref{eq:Hartogs} is assumed to grow at 
least linearly at infinity, we have that $\Lambda\cap S=\varnothing$ for every 
complex hyperplane $\Lambda\subset\C^{n+1}$ sufficiently close to 
$\Lambda_0=\{t=0\}$, and there is a path $\Lambda_s$ $(s\in[0,1])$ 
of such hyperplanes connecting $\Lambda_0$ to $\Lambda$. 
For any such $\Lambda$, the set $S_r$ in \eqref{eq:Kt1t2} is also 
$\Oscr(\C^{n+1}\setminus\Lambda)$-convex by 
\cite[Corollary A.5]{Forstneric2023Indag}. 
As $r\to\infty$ these sets exhaust $S$, so $S$ is 
$\Oscr(\C^{n+1}\setminus\Lambda)$-convex. 
Let $T_\Lambda:\C^{n+1}\to\C$ be a $\C$-linear projection with 
$(T_\Lambda)^{-1}(0)=\Lambda$. 
If $\Lambda$ is sufficiently close to $\Lambda_0$ then 
the restricted projection $T_\Lambda:S\to\C$ is still proper. 
Using again Theorem \ref{th:complements4.2},  
we infer that the projection $T_\Lambda :\C^{n+1}\setminus S\to\C$ 
has the Oka property. Applying this conclusion for two linearly independent 
projections and using Proposition \ref{prop:projections}, we see that 
$\C^{n+1}\setminus S=\Omega$ is an Oka manifold. 
\end{proof} 

We continue with the proof of part (b) in Theorem \ref{th:projective}.
Let $E$ be a positive holomorphic line bundle with a semipositive 
hermitian metric $h$ on $\CP^n$ with $n>1$. 
From the equivalences (i')\ $\Leftrightarrow$\ (ii')\ $\Leftrightarrow$\ (iii')
in Proposition \ref{prop:psc} and \eqref{eq:DeltaHartogs}, 
we see that the restriction of the disc bundle 
$\Delta_h(E)$ to any affine chart $\C^{n} \cong U\subset \CP^n$ is a 
pseudoconcave Hartogs domain of the form \eqref{eq:Hartogs} with 
$\log\phi$ plurisubharmonic. We have seen in Example \ref{ex:FB} that 
the function $\phi$ grows at least linearly near infinity. 
Hence, Proposition \ref{prop:Hartogs} implies that 
$\Delta_h(E)|_U$ is an Oka domain. Note that $\Delta_h(E)|_U$ 
is a Zariski open domain in $\Delta_h(E)$. Since charts of this kind 
cover $\Delta_h(E)$, the localization theorem for Oka manifolds 
(see \cite[Theorem 1.4]{Kusakabe2021IUMJ}) implies that $\Delta_h(E)$ is Oka.  
\end{proof}

%
%   A GENERALIZATION OF THEOREM 1.6 OF KUSAKABE (complements)
%
\begin{remark} \label{rem:complements1.6}
The proof of Proposition \ref{prop:Hartogs} also 
gives the following more general result related to 
\cite[Theorem 1.6]{Kusakabe2024}.
Recall that a closed subset $S$ of a Stein manifold $X$ 
is said to be $\Oscr(X)$-convex if it is exhausted by an increasing 
sequence of compact $\Oscr(X)$-convex sets. 

%
%  OKA COMPLEMENTS 
%
\begin{theorem}\label{th:K1.6bis} 
Let $S$ be a closed subset of $\C^{n} \times \C^*$ $(n\ge 2)$ 
which is $\Oscr(\C^{n} \times \C^*)$-convex.
Assume that for every complex hyperplane 
$\C^n\cong\Lambda\subset \C^{n+1}$ 
close enough to $\Lambda_0=\C^n\times \{0\}$ we have that 
$\Lambda\cap S=\varnothing$. Then, $\C^{n+1}\setminus S$ is an Oka domain. 
\end{theorem} 

The hypothesis that the condition $\Lambda\cap S=\varnothing$ for all hyperplanes $\Lambda$ close to $\Lambda_0$ is equivalent to asking that 
the projective closures of $\Lambda_0$ and $S$ do not intersect 
at infinity. For closed subsets $S$ of Euclidean spaces of dimension $\ge 3$, 
Theorem \ref{th:K1.6bis} generalizes 
\cite[Theorem 1.1]{ForstnericWold2024IMRN} 
due to Forstneri\v c and Wold. Indeed, the holomorphic convexity hypothesis 
on the set $S$ in the latter result (where it is called $E$) is strictly stronger 
than the one in Theorem \ref{th:K1.6bis}. 
However, Theorem \ref{th:K1.6bis} does not apply to subsets of $\C^2$, 
while the cited result \cite[Theorem 1.1]{ForstnericWold2024IMRN} does. 
\end{remark} 

% 
% PROOF OF THEOREM 1.4
% 
\begin{proof}[Proof of Theorem \ref{th:density1}] 
Let $\C^n\cong U_i\subset\CP^n$ for $i=0,\ldots,n$ be affine Euclidean 
charts covering $\CP^n$ such that the Stein manifold $X_i=X\cap U_i$ 
has the density property for every $i$. 
By the localization theorem \cite[Theorem 1.4]{Kusakabe2021IUMJ},
it suffices to prove that the restricted bundle $\Delta_h(E)|_{X_i}$
is Oka for every $i$. There is a standard trivialization 
$E|_{U_i}\cong U_i\times \C$, 
and the bundle $\Delta_h(E)|_{U_i}=\{(z,t):|t|<\phi(z)\}$ is a Hartogs domain 
of the form \eqref{eq:Hartogs} with the function $\phi:U_i\to (0,\infty)$ 
growing at least linearly near infinity (see Example \ref{ex:FB}). Hence,
\[
	\Omega_i:=\Delta_h(E)|_{X_i} = \{(x,t)\in X_i\times \C: |t|<\phi(x)\}.
\]
Since $\imath\Theta_h\ge 0$ on $X$, 
$\Omega_i$ is pseudoconcave (see the equivalence
(i')\ $\Leftrightarrow$\ (iii') in Proposition \ref{prop:psc}) and
$\log \phi$ is plurisubharmonic on $X_i$. Hence, the closed set 
\[
	S_i=E|_{X_i}\setminus \Omega_i = \{(x,t)\in X_i\times \C: |t|\ge \phi(x)\}
\]
is holomorphically convex in $X_i\times \C^*$ 
(see the proof of Proposition \ref{prop:Hartogs}).
Let $T:X_i\times\C \to \C$ denote the projection onto the second factor, 
$T(x,t)=t$. These properties imply that the restricted projection 
$T|_{S_i}:{S_i}\to\C$ is proper, and ${S_i}$ is a family of compact holomorphically 
convex sets in $X_i$ with respect to $T$ (see Definition \ref{def:family}). 
Since $X_i$ is a Stein manifold with the density property,
Theorem \ref{th:complements4.2} implies that the restricted projection 
$T:(X_i\times \C)\setminus {S_i} = \Omega_i \to \C$ has the Oka property.
We now apply the same argument with tilted projections 
$T_\Lambda: U_i\times \C \to \C$ defined by affine  
hyperplanes $\Lambda\subset U_i\times \C \cong\C^{n+1}$
sufficiently close to $\Lambda_0=U_i\times \{0\}$.
Such a hyperplane $\Lambda$ is the graph of a $\C$-linear function 
$t=\xi(x)$ of $x\in U_i\cong \C^n$, and 
$\Lambda\cap (X_i\times \C)=\{(x,t):x\in X_i,\ t=\xi(x)\}$.
The fibres of the restricted projection $T_\Lambda: X_i\times \C \to \C$
are parallel translates of $\Lambda\cap (X_i\times \C)$ in the vertical 
$t$-direction, so this projection is a (trivial) holomorphic fibre bundle 
with fibre $X_i$. Since $\phi$ grows at least linearly near infinity, 
the projection $T_\Lambda:{S_i}\to\C$ is proper if $\Lambda$ is close enough to
$\Lambda_0$. For such $\Lambda$, the same argument as before shows 
that $T_\Lambda : (X_i\times \C)\setminus {S_i}=\Omega_i \to \C$ 
has the Oka property. Clearly, finitely many such projections satisfy 
the hypotheses in Proposition \ref{prop:projections}, 
and hence $\Omega_i$ is an Oka domain for every for $i=0,\ldots,n$. 

Since $\Delta_h(E)|_X=\bigcup_{i=0}^n \Omega_i$ and every
$\Omega_i$ is Zariski open in $\Delta_h(E)|_X$, 
it follows from the localization theorem \cite[Theorem 1.4]{Kusakabe2021IUMJ}  
that $\Delta_h(E)|_X$ is Oka. The fact that the exterior tube 
$D_h(E)|_X$ is Kobayashi hyperbolic is seen as in the proof of 
Theorem \ref{th:projective} (c).
\end{proof}

% 
% PROOF OF THEOREM 1.5
% 
\begin{proof}[Proof of Theorem \ref{th:density2}] 
By the assumption, 
there are holomorphic sections $s_0,\ldots,s_n:X\to E$ such that
$X_i=\{x\in X:s_i(x)\ne 0\}$ is a Stein manifold with the density property
for every $i=0,1,\ldots,n$ and $\bigcup_{i=0}^n X_i=X$. 
Consider the holomorphic map $\Phi:X\to\CP^n$ given by
\begin{equation}\label{eq:Phi}
	\Phi(x)=[s_0(x):s_1(x):\cdots:s_n(x)]\in \CP^n, \quad x\in X.
\end{equation}
(The map $\Phi$ is well-defined since $s_i(x)$ are elements of the $1$-dimensional vector space $E_x\cong\C$ and at least one of them is 
nonzero for every $x$.) Note that $\Phi$ maps $X_i=\{s_i\ne 0\}$ to the 
complement of the standard $i$-th hyperplane 
$\CP^{n-1}\cong H_i\subset \CP^n$,
and $\Phi^{-1}(\CP^n\setminus H_i)=X_i$.
It follows that $E$ is isomorphic to $\Phi^*\Oscr_{\CP^n}(1)$, the pullback of the 
hyperplane section bundle (see \cite[Theorem II.7.1]{Hartshorne1977}).
For completeness, we include a simple argument. 
Let $\pi^*:E^*\to X$ be the dual bundle of $\pi:E\to X$,
and denote by $\langle e^*,e\rangle$ the natural pairing of elements 
$e\in E$ and $e^*\in E^*$ over the same base point $\pi(e)=\pi^*(e^*)\in X$.
Let $\U\to\CP^n$ be the universal bundle. 
We can identify $\U$ with $\C^{n+1}$ blown up at 
the origin so that the zero section $\U(0)\cong\CP^n$ is the exceptional fibre 
over $0\in\C^{n+1}$ and the fibres of the projection $\U\to\CP^n$ 
are the complex lines in $\C^{n+1}$ through the origin.
The holomorphic map $\wt \Phi:E^* \to \C^{n+1}$ given by
\[
	\wt \Phi(e^*) = \bigl(
	\langle e^*, s_i(x)\rangle\bigr)_{i=0}^n,\quad e^*\in E^*,\ x=\pi^*(e^*)\in X
\]
maps $E^*_x$ isomorphically onto the complex line in $\C^{n+1}$ determined 
by the point $\Phi(x)\in \CP^n$, so it gives a line bundle isomorphism 
$E^*\cong \Phi^*\U$. It follows that 
$E\cong \Phi^*\U^*=\Phi^* \Oscr_{\CP^n}(1)$.

The proof can now be completed as in Theorem \ref{th:density1}.
The restricted bundle $E|_{X_i}\cong X_i\times \C$ admits a trivialization 
induced via the map $\Phi$ \eqref{eq:Phi} by the standard trivialization of 
$\Oscr_{\CP^n}(1)$ over $U_i=\CP^n\setminus H_i\cong \C^n$. 
In this trivialization, $\Delta_h(E)|_{X_i}$ is a pseudoconcave Hartogs domain 
of the form \eqref{eq:Hartogs} in $X_i\times \C$.
The same argument as in the proof of Theorem \ref{th:density1}, 
using the Oka property of tilted projections
$(X_i\times \C)\setminus \Delta_h(E)|_{X_i} \to \C$ which come from linear 
projections $\C^n \times \C\to \C$ close to the standard projection onto 
the second factor, shows that $\Delta_h(E)|_{X_i}$ is Oka for every $i=0,\ldots,n$.
By the localization theorem, it follows that $\Delta_h(E)$ is Oka.
\end{proof}

% 
% PROOF OF THEOREM 1.8
% 
\begin{proof}[Proof of Theorem \ref{th:negative}] 
Let $\pi:E\to X$ denote the vector bundle projection and set 
$S=\{e\in E:|e|_h\leq 1\}$. Assuming that $\rank\, E=r>1$ and the 
hermitian metric $h$ is Griffiths seminegative, 
we wish to prove that the exterior tube 
$D_h(E)=E\setminus S=\{e\in E:|e|_h > 1\}$ is an Oka manifold.
Condition (iv) in Proposition \ref{prop:positivity-psh} shows that the 
squared norm function $\phi(e)=|e|_h^2$ is plurisubharmonic on $E$.
Hence, for each holomorphic chart $\psi:U \to \B^n$ from an open set 
$U\subset X$ onto the unit ball $\B^n\subset\C^n$ $(n=\dim X)$ and 
each $0<\rho<1$, the compact set $\{e\in S|_U:|\psi\circ\pi(e)| \leq \rho \}$ 
is defined by plurisubharmonic functions in the Stein manifold $E|_U$, so it 
is $\Oscr(E|_U)$-convex (see Stout \cite[Theorem 1.3.11]{Stout2007}). 
Since $E\to X$ is a holomorphic vector bundle of rank $r\ge 2$,
its fibre $\C^r$ has the density property \cite{AndersenLempert1992}. 
Hence, Theorem \ref{th:complements4.2}  
implies that the projection $\pi:D_h(E)=E\setminus S\to X$ 
has the Oka property (see Definition \ref{def:Okamap}).  
Since it is also a topological fibre bundle, it is an Oka map. 
As $X$ is an Oka manifold, it follows that $D_h(E)$ is an Oka manifold  
(see \cite[Theorem 3.15]{Forstneric2023Indag} saying that, if $Y\to X$ is 
an Oka map of complex manifolds, then $Y$ is an Oka manifold 
if and only if $X$ is an Oka manifold). 
\end{proof} 

%
%
%  SECTION: EXAMPLES
%
%
\section{Examples of line bundles satisfying Theorem \ref{th:density2}}
\label{sec:examples}

In this section, we give examples and obtain 
functorial properties of the class of polarised manifolds 
with the polarised density property (see Definition \ref{def:PDP}). 

We first show that every holomorphic line bundle satisfying 
the condition in Theorem \ref{th:density2} is ample,
and hence it is natural to restrict ourselves to the polarised situation
from the beginning.

\begin{proposition}\label{prop:positive}
Let $E$ be a holomorphic line bundle on a compact complex manifold $X$.
Assume that for each point $x\in X$ there exists a divisor $D\in |E|$ whose complement $X\setminus D$ is a Stein neighbourhood of $x$. Then $E$ is ample. \end{proposition}

\begin{proof}
By the assumption, there are finitely many section $s_0,s_1,\ldots,s_n:X\to E$ 
whose divisors $D_i=\{s_i=0\}$ have empty intersection 
and the domain $X\setminus D_i=\{s_i\neq 0\}$ is Stein for every $i=0,1,\ldots,n$. 
Consider the holomorphic map $\Phi=[s_0:\cdots:s_n]:X\to\CP^n$ \eqref{eq:Phi}. 
We have seen in the proof of Theorem \ref{th:density2} that $E$ is isomorphic 
to the pullback $\Phi^*\Oscr_{\CP^n}(1)$ of the hyperplane section bundle
(cf.\ \cite[Theorem II.7.1]{Hartshorne1977}). 
Given a point $z=[z_0:\cdots:z_n] \in \CP^n$, choose 
$i\in \{0,\ldots,n\}$ such that $z_i\ne 0$ and note that 
$\Phi^{-1}(z)$ is a closed complex subvariety of $X$ contained in 
the Stein domain $X\setminus D_i$. 
Since a Stein manifold does not contain any compact complex subvariety 
of positive dimension, $\Phi^{-1}(z)$ is a finite set (or empty),
so $\Phi$ is a finite holomorphic map. 
It follows that the line bundle $E\cong\Phi^*\Oscr_{\CP^n}(1)$ is ample 
(see Lazarsfeld \cite[proof of Theorem 1.2.13]{Lazarsfeld2004}).
\end{proof}

%
%	TENSOR POWERS
%
\begin{proposition} \label{prop:tensor2}
If a polarised manifold $(X,E)$ has the polarised density property, then so 
does every positive tensor power $(X,E^{\otimes k})$ for $k>0$. 
\end{proposition}

\begin{proof}
If the line bundle $E$ is given on an open cover $\{U_i\}$ of $X$ by a 1-cocycle
$\phi_{i,j}$, then a holomorphic section $f:X\to E$ is given by a collection 
of holomorphic functions $f_i:U_i\to \C$ satisfying $f_i=\phi_{i,j}f_j$ on $U_{i,j}$. 
Since the bundle $E^{\otimes k}$ is given by the 1-cocycle $\phi^k_{i,j}$,
the collection $f_i^k$ defines a holomorphic section 
$f^{\otimes k}$ of $E^{\otimes k}$. Evidently, $\{f=0\}=\{f^{\otimes k}=0\}$. 
By the assumption there are holomorphic sections
$s_0,\ldots,s_n:X\to E$ such that for every $i=0,1,\ldots,n$ 
the domain $X_i=\{s_i\ne 0\}$ is a Stein manifold with the density
property and $\bigcup_{i=0}^n X_i=X$. Hence, for any integer $k\ge 1$ 
the collection $s^{\otimes k}_0,\ldots,s^{\otimes k}_n$ of sections  
of $E^{\otimes k}$ shows that $(X,E^{\otimes k})$ has the polarised density property. 
\end{proof}

%
%  EXAMPLE: LINE BUNDLES ON GRASSMANNIANS
%
\begin{example}[Line bundles on Grassmannians]\label{ex:Grassmannian}
Given integers $1\le m<n$ we denote by $G_{m,n}$ the 
Grassmann manifold of complex $m$-dimensional subspaces of $\C^n$. 
Note that $G_{1,n}=\CP^{n-1}$. 
These manifolds are complex homogeneous, and hence Oka. 
The Pl\"ucker embedding $P:G_{m,n}\hra \CP^N$, with 
$N={n\choose m}-1$, sends an $m$-plane $\span(v_1,\ldots,v_m)\in G_{m,n}$ 
(where $v_1,\ldots,v_m\in\C^n$ are linearly independent vectors) to 
the complex line in $\C^{N+1}$ given by the vector 
$v_1\wedge \cdots\wedge v_m\in \Lambda^m(\C^n)\cong\C^{N+1}$. 
The intersection of the submanifold $X=P(G_{m,n})\subset \CP^N$ 
with an affine chart $\C^N\cong U\subset \CP^N$ is biholomorphic to
$\C^{m(n-m)}$, which has the density property if $m(n-m)=\dim G_{m,n} >1$.
It follows that the pullback $P^*\Oscr_{\CP^N}(1)$ of the hyperplane section 
bundle on $\CP^N$ to $G_{m,n}$ has the polarised density property and 
Theorem \ref{th:density1} applies to it. Every holomorphic line bundle on 
$G_{m,n}$ is obtained from a line bundle on $\CP^N$, and the pullback map 
$P^*:\Pic(\CP^N)\to \Pic(G_{m,n})$ is a group isomorphism; hence,
$\Pic(G_{m,n})\cong\Z$ (see \cite[Example 1.1.4 (3)]{Brion2005} or 
\cite[Lemma 11.1]{Dolgachev2003}).  The pullback of the universal bundle 
$\U=\Oscr_{\CP^N}(-1)$ is isomorphic to the determinant bundle 
of the universal bundle on $G_{m,n}$, and it generates $\Pic(G_{m,n})$. 
Write $\Oscr_{G_{m,n}}(k)$ for the $(-k)$-th tensor power of this generator. 
Thus, $\Oscr_{G_{m,n}}(1)=P^*\Oscr_{\CP^N}(1)$. 
A line bundle $E\to G_{m,n}$ is positive 
(resp.\ negative) if $E\cong \Oscr_{G_{m,n}}(k)$ for some $k>0$ (resp.\ $k<0$). 
The above observation for $\Oscr_{G_{m,n}}(1)$ and  
Proposition \ref{prop:tensor2} imply the following.

% 
% GRASSMANNIANS 
%
\begin{proposition}\label{prop:Grassmannian}
Every Grassmannian of dimension $>1$ has the polarised
density property. 
\end{proposition}
\end{example}

To state the next result, consider a pair of polarised manifolds $(X_1,E_1)$ and 
$(X_2,E_2)$. Let $\pi_i:X_1\times X_2\to X_i$ for $i=1,2$ denote the standard 
projections. Then, $\pi_i^*E_i$ is a holomorphic line bundle on $X_1\times X_2$ 
for $i=1,2$. Their tensor product
\[
	E=E_1\boxtimes E_2 := (\pi_1^* E_1)\otimes (\pi_2^* E_2) \to X_1\times X_2
\]
is called the {\em external tensor product} of $E_1$ and $E_2$.
A pair of holomorphic sections $f_i\in H^0(X_i,E_i)$ for $i=1,2$ defines a holomorphic 
section $f_1\boxtimes f_2\in H^0(X_1\times X_2,E_1\boxtimes E_2)$ by
trivially extending both line bundles and sections to 
$X_1\times X_2$ and taking their tensor product.
Similarly, for a pair of semipositive hermitian metrics $h_i$ on $E_i$ for $i=1,2$, 
the semipositive hermitian metric $h=h_1\boxtimes h_2$ on $E_1\boxtimes E_2$ 
is defined in an obvious manner by considering $h_i$ as a hermitian metric 
on $\pi_i^*E_i$. Note that the restriction of $E=E_1\boxtimes E_2$ 
to $X_1\times \{x_2\}$ $(x_2\in X_2)$ is isomorphic to $E_1$, and analogously 
for the second factor. Clearly, this operation extends to any finite number of line
bundles $E_i\to X_i$, $i=1,\ldots,m$. 

%
%  PDP of external products 
%
\begin{proposition}\label{prop:product}
If the polarised manifolds $(X_1,E_1)$ and $(X_2,E_2)$ have the polarised density property, then the product $(X_1\times X_2,E_1\boxtimes E_2)$ also has the 
polarised density property.
\end{proposition}

\begin{proof}
Let $f_1,\ldots,f_n\in H^0(X_1,E_1)$ and $g_1,\ldots,g_m\in H^0(X_2,E_2)$ 
be holomorphic sections of the respective line bundles 
which satisfy the definition of the polarised density property.  
As explained above, we may consider both bundles and their section to be 
defined on $X= X_1\times X_2$. Consider the collection of sections 
$f_i\boxtimes g_j\in H^0(X,E_1\boxtimes E_2)$ 
for $i=1,\ldots,n$ and $j=1,\ldots,m$.
For any pair of indices $i,j$ in the given range, the set
\[
	U_{i,j}:=\{f_ig_j\ne 0\} = \{x_1\in X_1:f_i(x_1)\ne 0\} 
	\times \{x_2\in X_2:g_i(x_2)\ne 0\}
\]
is the product of Stein manifolds with the density property, so it is Stein 
with the density property (see Varolin \cite[p.\ 136, I.1]{Varolin2001}).
Since the sets $U_{i,j}$ cover $X$, the proposition follows.
\end{proof}

%
%  Product with CP^r
%
\begin{proposition}\label{prop:productCPr}
If the polarised manifold $(X,E)$ has the polarised density property, then 
$(X\times\CP^n, E\boxtimes\Oscr_{\CP^n}(k))$ $(n>0,\ k>0)$ 
also has the polarised density property.
The same is true for $(X \times G_{m,n},E\boxtimes\Oscr_{G_{m,n}}(k))$
with $1\le m<n$ and $k>0$. 
\end{proposition}

\begin{proof}
Since every projective space is also a Grassmannian, it suffices
to consider the second case. If $\dim G_{m,n}>1$, this follows from 
Propositions \ref{prop:Grassmannian} and \ref{prop:product}.
If $\dim G_{m,n}=1$ then $G_{m,n}=\CP^1$. 
We follow the proof of Proposition \ref{prop:product} 
and use that if $X$ is a Stein manifold with the density property 
then $X\times \C$ also has the density property 
(see Varolin \cite[p.\ 136, I.2]{Varolin2001}).
\end{proof}

%
% REMARK: On PDP of tensor products
%
\begin{remark}
Assuming that holomorphic line bundles $E_1$ and $E_2$ 
on a projective manifold $X$ have the polarised density property, 
we do not know whether their tensor product 
$E_1\otimes E_2$ has the polarised density property. 
Indeed, given nontrivial sections $f:X\to E_1$ and $g:X\to E_2$, the zero set
of the section $f\otimes g:X\to E_1\otimes E_2$ is $\{f=0\}\cup\{g=0\}$,
and its complement is $\{f\ne 0\}\cap \{g\ne 0\}$.
This manifold need not have the density property even if both 
$\{f\ne 0\}$ and $\{g\ne 0\}$ are Stein manifolds with the density property.
\end{remark}

Recall that a projective manifold is said to be rational if it is birationally isomorphic to a projective space. Every rational curve is isomorphic to $\CP^1$. 
Ischebeck \cite{Ischebeck1974} proved that if $Y$ 
is a rational manifold (in particular, if $Y$ is a projective space or a 
complex Grassmannian) then $\Pic(X\times Y)=\Pic(X)\times \Pic(Y)$, 
so we get all holomorphic line bundles on $X\times Y$ 
as external tensor products of lines bundles on $X$ and $Y$. 

%
% Products of rational manifolds with PDP have PDP
%
\begin{proposition}\label{prop:product-rational}
If $X_1,\ldots,X_m$ $(m\ge 2)$ are rational manifolds 
such that every $X_i$ with $\dim X_i>1$ has the polarised 
density property, then their product $X_1\times X_2\times \cdots\times X_m$ 
also has the polarised density property.
\end{proposition}

\begin{proof}
It suffices to prove the result for $m=2$ and apply induction. 
By Ischebeck \cite{Ischebeck1974} we have that 
$\Pic(X)=\Pic(X_1)\times\Pic(X_2)$ and each group $\Pic(X_i)$ is discrete.
Let $E$ be an ample line bundle on $X_1\times X_2$.
Then the restriction of $E$ to each factor $X_i$ $(i=1,2)$ 
is an ample line bundle $E_i$ and $E\cong E_1\boxtimes E_2$. 
If $\dim X_1>1$ and $\dim X_2>1$ then both $E_1$ and $E_2$
have the polarised density property by the assumption,
and the conclusion follows from Proposition \ref{prop:product}. 
If $\dim X_1>1$ and $\dim X_2=1$
then $X_2\cong \CP^1$, $E_2=\Oscr_{\CP^1}(k)$ for some $k>0$, 
and the conclusion follows from Proposition \ref{prop:productCPr}.
The same argument applies if $\dim X_1=1$ and $\dim X_2> 1$. 
In the remaining case, both $X_1$ and $X_1$ are isomorphic to $\CP^1$ 
and $E_i \cong \Oscr_{\CP^1}(k_i)$ for some $k_i>0$ $(i=1,2)$. 
Fixing a point $p=(p_1,p_2)\in X=\CP^1\times \CP^1$ we can find
a pair of holomorphic sections $f_i:\CP^1\to E_i$ $(i=1,2)$ such that 
$p_i\in U_i=\{f_i\ne 0\}\cong\C$. Thus, $f_1f_2$ is a section of
$E\cong E_1\boxtimes E_2$, and the set 
$\{(x_1,x_2)\in X: f_1(x_1) f_2(x_2) \ne 0\}$ is a neighbourhood of $p$
isomorphic to $\C^2$, which has the density property.
This shows that $X$ has the polarised density property.
\end{proof}

Since every complex Grassmannian is a rational manifold, we 
have the following corollary to Propositions 
\ref{prop:Grassmannian} and \ref{prop:product-rational}.

\begin{corollary}\label{cor:productofGrassmannians}
If $X=X_1\times\cdots \times X_m$ is a product of complex Grassmannians
and $\dim X>1$, then $X$ has the polarised density property.
\end{corollary}

We have the following generalization of Proposition \ref{prop:Grassmannian}.
This also implies the polarised density property of any 
hyperquadric \eqref{eq:hyperquadric} of dimension $>1$. 

%
%   Rational homogeneous manifolds have PDP
%
\begin{theorem}\label{th:rational}
Every rational homogeneous manifold of dimension $>1$ has the 
polarised density property.
\end{theorem}

\begin{proof}
By the Borel--Remmert theorem \cite{BorelRemmert1962}, 
a rational homogeneous manifold $X$ is a (generalized) flag manifold, 
i.e., $X=G/P$ where $G$ is a semisimple algebraic group and $P$ is a 
parabolic subgroup of $G$. Its Picard group $\Pic(X)$ is discrete and is 
generated by the classes of the Schubert divisors $D_1,\ldots, D_m$, 
and every ample line bundle $E$ on $X$ can be written as
\[
	E\cong[D_1]^{\otimes k_1}\otimes\cdots\otimes[D_m]^{\otimes k_m}
\]
with positive numbers $k_1,\ldots,k_m$. (See Kishimoto et al.\ 
\cite[Sect.\ 1.3]{KishimotoProkhorovZaidenberg2011} and the references 
therein, in particular Brion \cite[Proposition 1.4.1]{Brion2005}. 
In the special case when $P$ is a maximal parabolic subgroup of $G$, 
the Picard group $\Pic(X)\cong\Z$ is generated by a single divisor.
This is the case e.g.\ for Grassmannians, see Example 
\ref{ex:Grassmannian}.)
By the Bruhat decomposition (also called the Schubert decomposition),
%see Borel \cite{Borel1991}), 
the complement of the union of the 
Schubert divisors (which is the unique top dimensional cell 
in the Schubert decomposition of the flag manifold) is isomorphic to 
the affine space of dimension $\dim X>1$, which has the 
density property. (See e.g.\ \cite[Sect.\ 3.3]{LakshmibaiRaghavan2008},
and in particular Remark 3.3.0.2 ibid.)
Thus the support of the divisor
\[
	D=k_1D_1+\cdots+k_mD_m\in|E|
\]
has a Stein complement $X\setminus D$ with the density property.
%(The situation is similar to the one for Grassmannians, see Example \ref{ex:Grassmannian}.)
Since $X$ is homogeneous and the Picard group $\Pic(X)$ is discrete, 
for each point $x\in X$ there exists a holomorphic automorphism 
$\varphi$ of $X$ such that the pullback line bundle $\varphi^* E$ is 
isomorphic to $E$ and the divisor 
$\varphi^*D\in|\varphi^* E|=|E|$ does not contain $x$. Therefore, 
the bundle $E\to X$ satisfies the polarised density property.
\end{proof}

In conclusion, we pose the following open problems.

% 
% PROBLEM 
% 
\begin{problem}\label{prob:classA} 
Let $X$ be a projective Oka manifold of dimension $>1$ 
and $E$ be an ample holomorphic line bundle on $X$. 
\begin{enumerate}[\rm (a)]
\item Is there a hermitian metric $h$ on $E$ such that the 
disc bundle $\Delta_h(E)$ is an Oka manifold? 
\item Does this hold for every semipositive hermitian metric on $E$? 
\item Does this hold if $X$ is Zariski locally isomorphic to $\C^n$ with $n>1$?
\end{enumerate}
\end{problem}

At present, we do not know any example of a compact complex manifold
of dimension $>1$ with the polarised density property which is not 
rational homogeneous.

%
%  SECTION: 
%  HOLOMORPHIC MAPS FROM STEIN MANIFOLDS 
%
\section{Holomorphic maps from Stein manifolds to vector bundles}
\label{sec:cluster}

Assume that $(E,h)$ is a hermitian holomorphic vector bundle on a compact 
Oka manifold $X$. In this section, we combine the results obtained in this 
paper with those of Drinovec Drnov\v sek and Forstneri\v c \cite{DrinovecForstneric2010AJM} to find holomorphic maps $S\to E$ 
from Stein manifolds $S$ with $\dim S < \dim E$
which are either proper or have their boundary cluster set contained in the 
zero section of $E$. The former case occurs when $(E,h)$ is Griffiths 
negative and the exterior tube 
\begin{equation}\label{eq:Dh}
	D_h(E)=\{e\in E:|e|_h>1\}. 
\end{equation}
is Oka. This holds in particular if $\rank\, E>1$ (see Theorem \ref{th:negative})
or if $(E,h)$ is a negative line bundle on $\CP^n$ 
(see Theorem \ref{th:projective} (b')).
The latter case occurs when $(E,h)$ is a positive line bundle with 
Oka disc bundle $\Delta_h(E)=\{h<1\}$ \eqref{eq:db}; sufficient conditions
are given by Theorems \ref{th:projective}, \ref{th:density1}, 
\ref{th:density2}, and \ref{th:negative}. We begin with the former case.

%
%  PROPER MAPS INTO GRIFFITJHS NEGATIVE VECTOR BUNDLES
%
\begin{theorem}\label{th:proper}
Let $(E,h)$ be a Griffiths negative hermitian holomorphic vector bundle 
on a compact complex manifold $X$ (see Definition \ref{def:Griffiths-positive}).
Assume that $S$ is a Stein manifold with $\dim S < \dim E$, $K\subset S$
is a compact $\Oscr(S)$-convex subset, and $f_0:S\to E$ is a continuous map
which is holomorphic on a neighbourhood of $K$ and 
satisfies $f_0(S\setminus \mathring K)\subset E\setminus E(0)$.
If the domain $D_h(E)$ \eqref{eq:Dh} is Oka, then we can approximate $f_0$
uniformly on $K$ by proper holomorphic maps $f:S\to E$ homotopic to $f_0$.
Furthermore, if $2\dim S<\dim E$ then $f$ can be chosen an embedding,
and if $2\dim S\le \dim E$ then $f$ can be chosen an immersion.
\end{theorem}

With $(E,h)$ as in the theorem, the domain $D_h(E)$ \eqref{eq:Dh} is Oka 
if $E$ is a line bundle on $X=\CP^n$ (see Theorem \ref{th:projective} (b')), 
or if $\rank\, E>1$ and $X$ is an Oka manifold 
(see Theorem \ref{th:negative}), so the result applies in these 
cases. If $D_h(E)$ is Oka then for every $t>0$ the domain
\begin{equation}\label{eq:Dht}
	D_{h,t}=\{e\in E: |e|_h >t\}
\end{equation}
is Oka as well, since it is biholomorphic to $D_h(E)=D_{h,1}(E)$ by a fibre dilation. 

\begin{proof}
Choose a normal exhaustion 
$B_0\Subset B_1\Subset \cdots \subset \bigcup_{i=0}^\infty B_i=S$ 
by relatively compact, smoothly bounded, strongly pseudoconvex 
domains such that $K\subset B_0$ and the given map $f_0$ is holomorphic on a 
neighbourhood of $\overline B_0$. Also, choose an increasing sequence $0<t_0<t_1<\cdots$ with $\lim_{i\to\infty} t_i=+\infty$. Since the hermitian metric $h$ 
is Griffiths negative, the function 
\begin{equation}\label{eq:phi}
	\phi:E\to[0,+\infty), \quad \phi(e)=|e|_h^2\ \ (e\in E)
\end{equation} 
is strongly plurisubharmonic in $E\setminus E(0)$ (see Proposition 
\ref{prop:positivity-psh}). Clearly, $\phi$ is an exhaustion function on $E$ 
without critical points in $E\setminus E(0)$.

Recall that $\dim S<\dim E$ and 
$f_0(S\setminus \mathring K)\subset E\setminus E(0)$ by the assumption.
By \cite[Theorem 1.1]{DrinovecForstneric2010AJM} we can approximate 
$f_0$ uniformly on $K$ by a holomorphic map $\tilde f_0:\overline B_0\to E$,
which is homotopic to $f_0$ through a family of maps sending 
$\overline B_0\setminus \mathring K$ to $E\setminus E(0)$, 
such that $\tilde f_0(bB_0) \subset D_{h,t_0}(E)$ (see \eqref{eq:Dht}). 
The homotopy condition allows us to extend $\tilde f_0$ to a continuous map 
$\tilde f_0:S\to E$ satisfying $\tilde f_0(S\setminus B_0) \subset D_{h,t_0}(E)$,
and the given homotopy from $f_0$ to $\tilde f_0$ on $\overline B_0$ extends to 
a homotopy between these two maps on all of $S$ sending 
$S\setminus B_0$ to $E\setminus E(0)$. 

Since the tube $D_{h,t_0}(E)$ is biholomorphic to $D_h(E)$, and hence Oka,
we can apply the Oka principle in \cite[Theorem 1.3]{Forstneric2023Indag} 
to approximate $\tilde f_0$ uniformly on $\overline B_0$ by a holomorphic map 
$f_1:S\to E$, homotopic to $\tilde f_0$ by a homotopy as above, 
such that $f_1(S\setminus B_0) \subset D_{h,t_0}(E)$.

We now repeat the same procedure with the map $f_1$. First, we approximate
$f_1$ on $\overline B_0$ by a holomorphic map $\tilde f_1:\overline B_1\to E$
such that $\tilde f_1(\overline B_1\setminus B_0) \subset D_{h,t_0}(E)$ and 
$\tilde f_1(bB_1)\subset D_{h,t_1}(E)$. Next, we extend $\tilde f_1$ to a continuous map
$\tilde f_1:S\to E\setminus E(0)$ 
which agrees with the given holomorphic map $\tilde f_1$
on a neighbourhood of $\overline B_1$ and satisfies 
$\tilde f_1(S\setminus B_1)\subset D_{h,t_1}(E)$.
Since the tube $D_{h,t_1}(E)$ is Oka,
we can apply \cite[Theorem 1.3]{Forstneric2023Indag} 
to approximate $\tilde f_1$ uniformly on $\overline B_1$ by a holomorphic map 
$f_2:S\to E$ such that $f_2(S\setminus B_1) \subset D_{h,t_1}(E)$. 
By the same argument as in the first step,
there is a homotopy connecting $f_1$ to $f_2$ sending $S\setminus \mathring K$
to $E\setminus E(0)$.

Continuing inductively, we find a sequence of holomorphic maps
$f_i: S\to E$ for $i=1,2,\ldots$ such that the following conditions 
hold for every $i\ge 1$:
\begin{enumerate}[\rm (i)] 
\item $f_{i}$ approximates $f_{i-1}$ as closely as desired on $\overline B_{i-1}$. 
\item $f_i(S\setminus B_{i-1}) \subset D_{h,t_{i-1}}(E)$.
\item $f_i$ is homotopic to $f_{i-1}$ through a homotopy sending 
$S\setminus \mathring K$ to $E\setminus E(0)$. 
\end{enumerate}
Assuming as we may that the approximation is close enough at every step,
the sequence $f_i$ converges uniformly on compacts in $S$ to a proper
holomorphic map $f:S\to E$ homotopic to the initial map $f_0$.
(Condition (iii) is only needed to keep the induction going.) 
The additions in the last sentence of the theorem follow by using the 
well-known general position argument. We leave the obvious details to the reader.
\end{proof}

Assuming that $(E,h)$ is a hermitian line bundle on $X$, we have seen in Section 
\ref{sec:prelim} that the tube $D_h(E)$ \eqref{eq:Dht} is fibrewise biholomorphic 
to the punctured disc bundle $\Delta^*_{h^*}(E^*)$ in the hermitian dual bundle 
$(E^*,h^*)$, and the section $E(\infty)$ in the associated $\CP^1$-bundle $\wh E\to X$
corresponds to the zero section $E^*(0)$ of the dual bundle.  
Hence, Theorem \ref{th:proper} implies an analogous result for maps 
$S \to E^* \setminus E^*(0)$ whose cluster set lies in the zero section $E^*(0)$.
However, we can prove a stronger result in this direction, allowing the initial
map $S\to E^*$ to intersect the zero section is a compact set. 
To state the result, we recall the following notion.

A sequence $(x_j)_{j\in \N}$ in a topological space $X$ is said to be divergent 
if for every compact set $K\subset X$ there is $j_0\in \N$ such that 
$x_j\in X\setminus K$ for all $j\ge j_0$. Given a continuous map $f:X\to Y$ 
of topological space with $X$ noncompact, its cluster set is  
\[
	C(f)=\{y\in Y: \text{there is a divergent sequence}\ x_j\in X\ 
	\text{with}\ \lim_{j\to\infty}f(x_j)=y\}.
\]
(If $X$ is compact then $C(f)=\varnothing$.) We have the following result
for maps from Stein manifolds to positive hermitian line bundles on 
compact Oka manifolds.

%
%  Holomorphic maps to Oka disc bundles with the cluster set in the zero section
%
\begin{theorem}\label{th:cluster}
Assume that $(E,h)$ is a positive hermitian holomorphic line bundle on a 
compact complex manifold $X$ such that the disc bundle $\Delta_h(E)$ is Oka.
Given a Stein manifold $S$ with $\dim S\le \dim X$, a compact 
$\Oscr(S)$-convex set $K\subset S$, and a continuous map $f_0:S\to E$ 
which is holomorphic on a neighbourhood of $K$ and satisfies 
$f_0(K)\subset \Delta_h(E)$, $f_0$ can be approximated uniformly on $K$ by
holomorphic maps $f:S\to \Delta_h(E)$ homotopic to $f_0$ such that 
$C(f)\subset E(0)$. If in addition $2\dim S\le \dim X$ then $f$ can be chosen 
an injective immersion.
\end{theorem}

\begin{proof}
Since the bundle $(E,h)$ is positive, the function $\sigma=1/h:E\to (0,+\infty)$ 
is strongly plurisubharmonic (see Proposition \ref{prop:psc}). Furthermore, 
$d\sigma \ne 0$ on $E\setminus E(0)$, and for any pair of numbers $0<a<b$ 
the set 
\begin{equation}\label{eq:Eab}
	E_{a,b}=\{e\in E: a \le \sigma(e)\le b\}
\end{equation}
is compact. Let $U\subset S$ be an open Stein domain containing $K$ and
$f_0:S\to E$ be a continuous map which is holomorphic on $U$. Choose a
smoothly bounded strongly pseudoconvex domain $B_0\subset S$
such that $K\subset B_0\subset \bar B_0\subset U$ and $\bar B_0$
is $\Oscr(S)$-convex. Recall that $X$, and hence $E$, are Oka manifolds.
By the transversality theorem for holomorphic maps of Stein manifold
to Oka manifolds (see \cite[Corollary 8.8.7]{Forstneric2017E}), we may assume
that the map $f_0:U\to E$ is transverse to the zero section $E(0)$.
Hence, the set 
\[	
	V_0=\{x\in U: f_0(x)\in E(0)\}
\] 
is a closed complex subvariety 
of $U$ which does not contain any connected component of $U$.
The set $K\cup (\bar B_0\cap V_0)$ is $\Oscr(S)$-convex.
Let $c_0>0$ be chosen such that $f_0(\overline B_0) \subset \{\sigma>c_0\}$. 
Pick numbers $c_1>c_0$ and $\epsilon >0$. 
Choose a compact $\Oscr(S)$-convex set $K'\subset U$ such that 
\begin{equation}\label{eq:Kprime}
	K\cup (\bar B_0\cap V_0) \subset \mathring K'\ \ \ \text{and}\ \ \ 
	\sigma\circ f_0>c_1+1\ \text{on}\ bB_0\cap K'.
\end{equation}
The second condition holds if $K'$ is a sufficiently small neighbourhood
of $K\cup (\bar B_0\cap V_0)$. Set $K_0=K'\cap \bar B_0$. 
We claim that there is a holomorphic map $g:\bar B_0\to E$ 
satisfying the following conditions for a fixed Riemannian distance function 
$\dist$ on $E$:
\begin{enumerate}[\rm (i)]
\item $\dist(g(x),f_0(x))<\epsilon$ for all $x\in K_0$.
\item $\sigma(g(x))>\sigma(f_0(x))-\epsilon$ for all $x\in \bar B_0\setminus K_0$.
\item $\sigma\circ g>c_1$ on $bB_0$.
\item $g$ is homotopic to $f_0$ on $\bar B_0$. 
\end{enumerate}
In the special case when $V_0=\varnothing$ and $K_0$ is a compact 
subset of $D$, a map $g$ with these properties is given by
\cite[Lemma 5.3]{DrinovecForstneric2010AJM}, which is the main 
inductive step in \cite[proof of Theorem 1.1]{DrinovecForstneric2010AJM}.
In the case at hand, the compact set $K_0\subset \bar B_0$ may intersect
$bB_0$, but we have that $\sigma\circ f_0>c_1+1$ on $bB_0\cap K_0$
by condition \eqref{eq:Kprime}. Hence, to ensure condition (iii),
it suffices to apply \cite[Lemma 5.2]{DrinovecForstneric2010AJM} finitely many times for points in the compact set  
$\{x\in bB_0\setminus K_0: \sigma(f_0(x))\le c_1+1\}$.
(The cited lemma amounts to lifting a small piece of $f_0(bB_0)$ to a higher 
level set of $\sigma$ by a prescribed amount, while at the same time
approximating $f_0$ on $K_0$ (condition (i)). 
This lifting procedure uses modifications involving local peak functions 
and gluing, and it is designed in such a way that condition (ii) can be fulfilled. 
Condition (iv) is built into the construction as well.
The fact that the function $\sigma$ is not defined on $E(0)$ is irrelevant in 
this proof since the compact set $K_0$ contains $V_0\cap\bar B_0$ in its 
relative interior, and $\sigma$ is only used on $\bar B_0\setminus K_0$.)

By approximation, we may assume that $g$ is holomorphic on a neighbourhood
of $\bar B_0$, and we can extend it to a continuous map $g:S\to E$
homotopic to $f_0$. We now use the hypothesis that the disc bundle 
$\Delta_h(E)$ is an Oka manifold. Hence, the tube 
\[
	\Omega_{c_1}=E(0)\cup \{\sigma>c_1\}=\Delta_{h,1/c_1}(E)
\]
is Oka as well. By the Oka principle in \cite[Theorem 1.3]{Forstneric2023Indag} 
we can approximate $g$ uniformly on $\overline B_0$ by a holomorphic map 
$f_1:S\to E$, homotopic to $g$, such that 
\[
	f_1(S\setminus B_0) \subset \Omega_{c_1}.
\]
Pick an arbitrary smoothly bounded strongly pseudoconvex domain 
$B_1\subset S$ such that $\overline B_0\subset B_1$ and $\overline B_1$
is $\Oscr(S)$-convex.

Continuing inductively, we obtain a normal exhaustion 
of $S$ by an increasing sequence of smoothly
bounded, strongly pseudoconvex domains 
$B_0\Subset B_1\Subset \cdots \subset \bigcup_{i=0}^\infty B_i=S$,
a sequence of continuous maps $f_i:S\to E$ $(i=0,1,\ldots)$, and an increasing
sequence $0<c_0<c_1<c_2<\cdots$ with $\lim_{i\to\infty}c_i=+\infty$
such that for every $i=1,2,\ldots$ the map $f_i$ is holomorphic on 
$\bar B_i$, it approximates $f_{i-1}$ on $\bar B_{i-1}$, 
and it maps $\overline {B_i\setminus B_{i-1}}$ to 
$\Omega_{c_i}=\Delta_{h,1/c_i}(E)$. Assuming as we may that the approximation
is close enough at every step, the limit map $f=\lim_{i\to\infty}f_i:X\to E$
exists, it approximates $f_0$ on $K$ and is homotopic to it, 
and it satisfies $C(f)\subset E(0)$. We leave the obvious details
of this induction to the reader. If $2\dim S\le \dim X$ then we can additionally
use the general position argument at every step of the induction 
to ensure that the map $f$ is an injective immersion.
\end{proof}

% 
% 
% ACKNOWLEDGEMENTS 
% 
% 
\medskip 
\noindent {\bf Acknowledgements.} 
Forstneri\v c is supported by the European Union (ERC Advanced grant HPDR, 101053085) and grants P1-0291, J1-3005, N1-0237 from ARIS, 
Republic of Slovenia. 
He wishes to thank the Erwin Schr\"odinger Institute in Vienna, Austria, 
for the opportunity to present this work at the workshop 
{\em Analysis and Geometry in Several Complex Variables} 
(Vienna, 20--24 November 2023). 
Kusakabe is supported in part by JSPS KAKENHI Grant JP21K20324. 
A part of this work was done during Kusakabe's visit 
to the Institute of Mathematics, Physics and Mechanics in Ljubljana
and the Faculty of Mathematics and Physics, University of Ljubljana.
He wishes to thank both institutions for the hospitality and support. 
The paper was finalised during Forstneri\v c's visit to Kyushu University,
and he wishes to thank it for the hospitality.   
We thank Finnur L\'arusson for a question which led to a 
more natural formulation of Theorem \ref{th:projective},
and Yuji Odaka for suggesting how to prove the polarised density property 
of rational homogeneous manifolds (Theorem \ref{th:rational}) 
by the Bruhat decomposition.

\noindent {\bf Statements and Declarations.}
On behalf of all authors, the corresponding author 
states that there is no conflict of interest. 
The manuscript has no associated data.

%%%%%%%%%% 
%%%%%%%%%% 
%%%%%%%%%% 
%%%%%%%%%% THE BIBLIOGRAPHY 
%%%%%%%%%% 
%%%%%%%%%% 

%{\bibliographystyle{abbrv} \bibliography{references}} 
%\begin{comment} 

%\end{comment}

%
% AFFILIATIONS
%
%\newpage

\vspace*{5mm} 
\noindent Franc Forstneri\v c 

\noindent Faculty of Mathematics and Physics, University of Ljubljana, Jadranska 19, SI--1000 Ljubljana, Slovenia 

\noindent 
Institute of Mathematics, Physics and Mechanics, Jadranska 19, SI--1000 Ljubljana, Slovenia 

\noindent e-mail: {\tt franc.forstneric@fmf.uni-lj.si} 

\vspace*{5mm} 
\noindent Yuta Kusakabe 

\noindent Faculty of Mathematics, Kyushu University,
744 Motooka, Nishi-ku, Fukuoka 819-0395, Japan

\noindent e-mail: {\tt kusakabe@math.kyushu-u.ac.jp} 

\end{document}